\newtheorem{theorem}{\sc Theorem}[section]
\newtheorem{lemma}[theorem]{\sc Lemma}
\newtheorem{prop}[theorem]{\sc Proposition}
\newtheorem{definition}{Definition}[section]
\newtheorem{corollary}[theorem]{\sc Corollary}
\newtheorem{conj}{\sc Conjecture}[section]
\newtheorem{remark}{Remark}
\renewcommand{\hom}{\mbox{\rm Hom}}
\newcommand{\ind}{\mbox{\rm ind}}
\newcommand{\coind}{\mbox{\rm coind}}
\newcommand{\conn}{\mbox{\rm conn}}
\newcommand{\dash}{\mbox{---}}
\newcommand{\cq}{\colonequals}
\def\Z{\mathbb{Z}}
\title{Warmth and connectivity of \\
neighborhood complexes of graphs}
\author{Anton Dochtermann and Ragnar Freij-Hollanti}
\begin{document}

\maketitle
\begin{abstract}
In this paper we study a pair of numerical parameters associated to a graph $G$.   One the one hand, one can construct $\hom(K_2, G)$, a space of homomorphisms from a edge $K_2$ into $G$ and study its (topological) connectivity.  This approach dates back to the neighborhood complexes introduced by Lov\'asz in his proof of the Kneser conjecture.   In another direction Brightwell and Winkler introduced a graph parameter called the warmth $\zeta(G)$ of a graph $G$, based on asymptotic behavior of $d$-branching walks in $G$ and inspired by constructions in statistical physics.   Both the warmth of $G$ and the connectivity of $\hom(K_2,G)$ provide lower bounds on the chromatic number of $G$.

Here we seek to relate these two constructions, and in particular we provide evidence for the conjecture that the warmth of a graph $G$ is always less than three plus the connectivity of $\hom(K_2, G)$.  We succeed in establishing a first nontrivial case of the conjecture, by showing that $\zeta(G) \leq 3$ if $\hom(K_2,G)$ has an infinite first homology group. We also calculate warmth for a family of `twisted toroidal' graphs that are important extremal examples in the context of $\hom$ complexes.  Finally we show that $\zeta(G) \leq n-1$ if a graph $G$ does not have the complete bipartite graph $K_{a,b}$ for $a+b=n$.  This provides an analogue for a similar result in the context of $\hom$ complexes.  

%Our results are relatively small steps in establishing the general conjecture, but our hope is to further bring attention to intriguing connections between these techniques.
\end{abstract}

\section{Introduction}
\label{sec:in}

Suppose $G$ is a graph with no multiple edges. In recent years a pair of numerical invariants associated to $G$ have been introduced in seemingly independent contexts. On the one hand, one can construct a \emph{space} of homomorphisms from a edge $K_2$ into $G$ and study various notions of topological \emph{connectivity}. This construction dates back to the neighborhood complexes $N(G)$ introduced by Lov\'asz in his proof of the Kneser conjecture, and was further developed by Babson and Kozlov.   In modern treatments we recover $N(G) \simeq \hom(K_2, G)$ as a space of homomorphisms from the edge $K_2$ into the graph $G$ (homomorphisms from an edge), an example of the more general $\hom$-complexes $\hom(T,G)$ of homomorphisms between two graphs $T$ and $G$ (see \cite{BabKoz}).  Precise definitions are given in Section~\ref{sec:Basics}, but the basic idea is that $\hom(K_2, G)$ is a polyhedral complex with 0-cells given by all directed edges of $G$, with higher dimensional cells given by directed complete bipartite graphs. 

In another direction Brightwell and Winkler studied notions of `long range action' of graph homomorphisms, motivated by constructions in statistical physics. They introduced a graph parameter called the \emph{warmth} $\zeta(G)$ of a graph $G$, a measure of the asymptotic behavior of $d$-branching walks in $G$.   The idea is a generalization of the following observation: if $B$ is a \emph{bipartite} graph then we can restrict the possibilities of the initial position of a random walk (thought of as a map from the one-branching tree $T^1$ to $B$) if we know where the walk is at the $n$th step, regardless of how large $n$ is. The warmth $\zeta(G)$ quantifies, in a way that will be made precise in Section~\ref{sec:Basics}, how large $d$ needs to be for the same to be true for maps $T^d\to G$. 

It turns out the both the warmth of $G$ and the connectivity of the edge space of $G$ provide lower bounds on the chromatic number, by definition the fewest number of colors needed to color the vertices of $G$ in such a way that adjacent vertices receive distinct colors. While upper bounds on chromatic number are in some sense straightforward (just `write down a coloring') finding lower bounds often requires methods from diverse branches of mathematics.   
In this language, a main result of \cite{Lov} is that for any graph $G$ we have
\[ \chi(G) \geq \conn(\hom(K_2,G)) + 3.\]
\noindent
Here we use the convention that $\conn(X) = -1$ if $X$ is nonempty and disconnected.  In more recent work \cite{BabKozproof}. On the statistical physics side, Brightwell and Winkler \cite{BriWin} show that warmth provides a lower bound on chromatic number: For any graph $G$ we have
\[ \chi(G) \geq \zeta(G). \]

%\begin{definition}
%The chromatic number $\chi(G)$ of a graph $G$ is defined as
%\[\chi(G) = \min\{n : \textrm{There exists a graph homomorphism $G \rightarrow K_n$} \}.\]
%\end{definition}

%(see below for precise definitions).
%\begin{definition}
%The warmth of a graph $G$ is defined as 
%\[\zeta(G) = \min\{d : \textrm{There exists a cold map $T^{d-1} \rightarrow G$} \}.\]
%\end{definition}
%The precise definition of a cold map is given below, but the basic idea is that a map (homomorphism) $f:T^d \rightarrow G$ is \emph{cold} if information about $f$ arbitrarily far from the root influences where the root could have been sent under $f$.  

As both warmth $\zeta(G)$ and connectivity of $N(G)$ provide lower bounds on chromatic number, the natural question that arises is whether the two parameters themselves are related.  We note that both are defined in terms of homomorphisms of certain graphs \emph{into} $G$ (which is why it is somewhat surprising that they relate to $\chi(G)$, which is defined in terms of homomorphisms \emph{from} $G$).  In particular they both behave well with respect to categorical graph products.  Both invariants also provide tight bounds on the chromatic number of complete graphs $K_n$, with $\zeta(K_n) = n$ and $\conn(\hom(K_2,K_n)) = n-3$. The space $\hom(K_2,K_n)$ can in fact be realized as the boundary of an $(n-1)$-dimensional convex polytope.

%Aside from these similarities the two parameters borrow techniques from quite disparate branches of mathematics, so finding any connection would be of interest.  As we have seen, the warmth of a graph is obtained by considering maps from \emph{infinite trees} into the graph $G$, whereas the connectivity of a $\hom(K_2,G)$ is defined in terms of maps from \emph{spheres} into the space of directed edges of $G$.  

% It turns out that both warmth and connectivity of $N(G)$ also are preserved by the graph operation given by \emph{folding} a vertex.  Definitions and discussion are given below, but this important fact allows up to restrict out attention to a certain class of `stiff' graphs.

A small example of a graph $G$ satisfying $\zeta(G) < \conn(N(G))+3$ is given by the \emph{Gr\"otszch graph} on 11 nodes and 20 edges~\cite{BriWin}, see Figure \ref{Fig:grotszch}.

\medskip

\begin{figure}[ht]

\begin{center}
  \includegraphics[scale = 0.35]{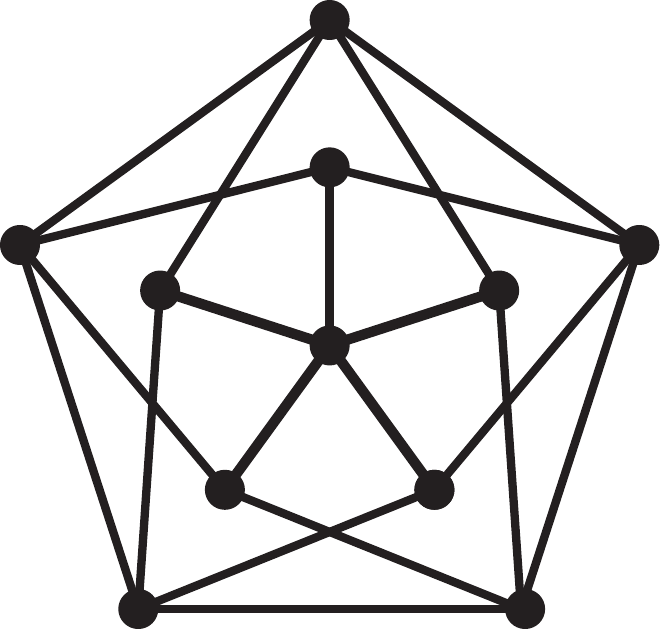}

    \caption{The Gr\"otszch graph $G$, with $\zeta(G) = 3$ and $\conn(N(G)) = 1$.}
\label{Fig:grotszch}

\end{center}
\end{figure}
Indeed, the Gr\"otszch graph is a special case of the Mycielski construction $G\mapsto M(G)$. The Mycielski construction always increases the chromatic number of a graph $G$ by 1, and up to homotopy corresponds to suspending the neighborhood complex $N(G)$~\cite{GyaJenSti} .  Hence it preserves tightness of the bound $\conn(N(G))+3\leq\chi(G)$. However, it may fail to increase warmth: the Gr\"otszch graph $M(C_5)$ has warmth \[\zeta(M(C_5))=\zeta(C_5)=3,\] as shown in~\cite{BriWin}. Thus, the warmth of a graph $G$ can be smaller than three plus the connectivity of $N(G)$.  

In fact the gap between warmth and connectivity of the neighborhood complex can be arbitrarily large.  As discussed in \cite{FadKah}, the family of Kneser graph $K_{3k, k}$ has neighborhood complex connectivity $k-1$ and constant warmth 3.   Furthermore, if we allow loops on our vertices (which of course takes us out of the world of finite chromatic number) we see that there can even be an infinite gap between the values of these two parameters.  In \cite{BriWin} it is shown that a graph $G$ has $\zeta(G) = \infty$ if and only if $G$ is `dismantlable', meaning that it can be folded down to a single looped vertex (see Section \ref{sec:Bipartite} for more details).  There exist graphs $G$ with the property that $N(G)$ is contractible and yet $G$ is not dismantlable, so in this case we have $\conn(N(G)) = \infty$ and yet $\zeta(G)$ finite. %REFERENCE?

This leads to the question whether warmth is \emph{always} less than three plus the connectivity of the neighborhood complex. Indeed the following conjecture, first published in \cite{FadKah}, will motivate the rest of our work in this paper.

\begin{conj}\label{mainconj}
For any finite graph $G$ we have
 \[\zeta(G)\leq \conn(\hom(K_2,G))+3.\]
\end{conj}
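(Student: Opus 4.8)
The plan is to convert both sides of the inequality into a common language --- homomorphisms \emph{into} $G$ on the one hand, fillings of spheres on the other --- and then to argue that a sufficiently branching tree homomorphism forces high connectivity of the edge space. First I would record the two reformulations used throughout. On the warmth side, $\zeta(G)\geq d+1$ is witnessed by a homomorphism $\phi\colon T^d\to G$ from the $d$-branching tree whose root value is \emph{determined at infinity}, i.e.\ eventually pinned down by the values of $\phi$ on deep levels; equivalently (Brightwell--Winkler), $\zeta$ is computed by a stabilizing decreasing chain of subsets of $V(G)$, which also yields $\zeta(G)=\infty$ precisely for dismantlable $G$. On the topology side, since $\hom(K_2,G)\simeq N(G)$, the inequality $\conn(\hom(K_2,G))\geq m$ means $\pi_i(N(G))=0$ for all $i\leq m$, so it suffices to produce disc fillings of $i$-spheres in $N(G)$ for $i\leq m$.

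The bridge between the two --- a statement roughly of the form ``a root-determined homomorphism from the $(m+3)$-branching tree into $G$ produces fillings of all $(\leq m)$-spheres in $N(G)$'' --- is the crux. The shift by three is the same one in Lov\'asz's theorem, which is encouraging, and suggests an inductive mechanism in the spirit of the Babson--Kozlov analysis of $\hom$-complexes: a homomorphism witnessing $\zeta(G)\geq d+1$ restricts, over the $G$-neighbourhoods of its level-one images, to homomorphisms from $(d-1)$-branching trees, hence inductively controls the connectivity of the neighbourhood complexes of the induced subgraphs $G[N(v)]$, and these are, up to homotopy, the links whose connectivity governs $\conn(N(G))$ by a nerve / Mayer--Vietoris argument. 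I expect the genuine difficulty to live here, in two forms. First, warmth is witnessed by an \emph{infinite} tree with a \emph{limiting} determination condition, whereas connectivity concerns \emph{finite} fillings, so one must extract from $\phi$ a finite combinatorial certificate, uniformly in the truncation depth. Second, the link/neighbourhood correspondence only controls connectivity up to homotopy-theoretic glue that must itself be bounded. It is this tension that, I believe, keeps the full conjecture open.

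For that reason I would first settle the first nontrivial case: if $H_1(\hom(K_2,G))$ is infinite, then $\zeta(G)\leq 3$. Here the topology collapses, because $H_1(N(G))$ infinite is equivalent to the existence of a non-nullhomotopic map $f\colon N(G)\to S^1$, i.e.\ of an integral $1$-cocycle (a ``winding'') on $N(G)$ that is not a coboundary. The plan is to assume $\zeta(G)\geq 4$, fix a root-determined homomorphism $\phi$ from the $3$-branching tree toward a vertex $v$, and derive a contradiction by transporting $f$ across $\phi$: the winding that $f$ records along paths in the image of a deep truncation of $T^3$ must be coherent, and the interplay of three-fold branching with the hypothesis that \emph{every} admissible extension of the deep boundary data sends the root to $v$ would pin this winding down, forcing $f$ to be nullhomotopic after all. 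The delicate point --- the one I expect to absorb most of the work --- is to show that the winding cocycle is genuinely carried by the tree homomorphism in the required sense: one must match a $1$-cocycle on the simplicial complex $N(G)$ against the vertex-and-neighbourhood combinatorics defining warmth and keep the bookkeeping under control as the truncation depth tends to infinity. Finally, as a check on the whole strategy I would compute both invariants for the ``twisted toroidal'' graphs, whose edge spaces are closed surfaces and which are the natural candidates both for tightness of the bound and for exposing any hidden obstruction to the inductive step.
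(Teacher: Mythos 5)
The statement is a conjecture; neither the paper nor your proposal proves it, and you say as much. The substantive comparison is therefore between your proposed strategy for the first nontrivial case --- $H_1(\hom(K_2,G))$ contains $\Z$ implies $\zeta(G)\leq 3$ --- and the paper's actual proof of Theorem~\ref{thm:fundgroup}. These take genuinely different routes. The paper never touches the tree $T^d$ or any map out of it. It works entirely on the $2$-stable-family side: it shows every class in $H_1(\hom(K_2,G))$ is represented by a sum of chains $c(\gamma)$ attached to even cycles $\gamma$ in $G$, picks, among all even cycles $\gamma'$ with $c(\gamma')=r_{\gamma'}x$ for some $r_{\gamma'}>0$, one minimizing $\ell(\gamma')/r_{\gamma'}$, and then builds the $2$-stable family $\{A_i,B_i\}$ explicitly by declaring $c_{i+2sn}\in A_i$, $d_{i+2sn}\in B_i$ whenever they sit on a cycle realizing a multiple of $x$ through an anchor $a_k$ or $b_k$. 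The infinite-order hypothesis makes the minimization meaningful, and minimality is used twice (properness of $A_i$, and $N(A_i)\cap N(A_{i+1})=B_i$). Your proposal instead wants a contradiction argument transporting a winding cocycle $f\colon N(G)\to S^1$ across a map from a branching tree; that is an entirely different mechanism.

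Two concrete gaps keep your sketch of the special case from being an argument. First, the setup is off: assuming $\zeta(G)\geq 4$ tells you that \emph{no} map $T^2\to G$ is cold; it does not produce a cold map from $T^3$ (that would additionally require $\zeta(G)\leq 4$). Relatedly, your gloss of coldness as ``root value determined at infinity'' is not the definition --- a cold map has a single \emph{forbidden} root value $a$, which is strictly weaker than the root being pinned down, and the distinction matters for any argument that tries to ``pin down a winding.'' Second, and more importantly, the crux --- how the three-fold branching, together with the $1$-cocycle $f$, forces $f$ to be a coboundary --- is exactly what would have to be supplied and is left as a hope; the paper avoids this entirely by recasting the homological hypothesis as a statement about finite even cycles in $G$ and then reasoning combinatorially. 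The broader outline for the full conjecture (nerve/Mayer--Vietoris over the induced subgraphs $G[N(v)]$) is likewise not something the paper develops, and the claim that $\conn(N(G))$ is controlled inductively by the connectivities of the $N(G[N(v)])$'s would need real justification before it could carry weight.
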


Conjecture \ref{mainconj} suggests a perhaps surprising relationship between the long range action of infinite $d$-branching trees and the homotopy classes of maps from a $d$-sphere into the edge space of a graph. It should be noted that if $T$ is any \emph{finite} tree, it can be shown that $\hom(T,G)$ is homotopy equivalent to the space $\hom(K_2, G)$ (see \cite{BabKoz}).  It should be noted that a first case of \ref{mainconj} with \[ \zeta(G) = 2 \Leftrightarrow (\conn(\hom(K_2,G)) = -1) \]
follows from the fact that a (nonempty, connected) graph $G$ has disconnected $N(G)$ if and only if $G$ is bipartite; which is the case if and only if $\zeta(G) = 2$.  Our first main result establishes a first nontrivial case of Conjecture \ref{mainconj}.

\newtheorem*{thm:fundgroup}{\sc Theorem \ref{thm:fundgroup}}
\begin{thm:fundgroup}
Suppose $G$ is a graph such that the first homology group $H_1(\hom(K_2,G))$ contains an infinite cyclic subgroup. Then $\zeta(G)\leq 3$.
\end{thm:fundgroup}

In the search for a counterexample to Conjecture~\ref{mainconj}, a natural graph to consider would be one where the connectivity bound fails give the true value of $\chi(G)$.  In the modern treatment~\cite{BabKoz, DocSch} of the original results from \cite{Lov} one can see that it is in fact the ${\mathbb Z}_2$-\emph{index} of the neighborhood complex that provides the weakest bound in the hierarchy of topological bounds on chromatic number.   In \cite{DocSch} a class of graphs $T_{k,m}$ called `twisted toroidal graphs' were constructed recursively, with the property that the connectivity of their neighborhood complexes are fixed while the ${\mathbb Z}_2$-index of the same complex increases.  The main result in Section \ref{sec:Twisted} is that the warmth of these graphs is similarly held constant as one iterates the construction, providing a family of graphs where the warmth bound is arbitrarily far from chromatic number. 

\newtheorem*{thm:twisted}{\sc Theorem~\ref{thm:twisted}}
\begin{thm:twisted}
For all $m\geq5$ and $k \geq 1$ the twisted toroidal graph $T_{k,m}$ has warmth $3$.
\end{thm:twisted}

Finally, we turn our attention to bipartite subgraphs, and the effect they have on warmth and connectivity of neighborhood complexes. In \cite{CLSW} the authors show that if a graph $G$ does not have a subgraph isomorphic to $K_{a,b}$ for some $a+b = n$, then $N(G)$ deformation retracts onto a complex of dimension $n-3$ (see also \cite{Kah} for a proof of this fact). In particular, if $\chi(G)$ is moreover finite, then $N(G)$ has connectivity $\leq n-4$. The main result in Section~\ref{sec:Bipartite} is an analogous result for warmth, further supporting Conjecture~\ref{mainconj}.

\newtheorem*{corr:bipartite}{\sc Theorem \ref{corr:bipartite}}
\begin{corr:bipartite}
Suppose $a$ and $b$ positive integers with $a+b \geq 3$.  If a graph $G$ does not contain any subgraph isomorphic to $K_{a,b}$, then $\zeta(G)\leq a+b-1$.
\end{corr:bipartite}

The rest of the paper is organized as follows.   In Section \ref{sec:Basics} we review the basic objects involved in our study, including precise definitions and characterizations of the neighborhood complex and warmth.  In Section \ref{sec:Fundamental} we establish the first nontrivial case of Conjecture \ref{mainconj}, Section \ref{sec:Twisted} is devoted to warmth of twisted toroidal graphs, and in Section \ref{sec:Bipartite} we prove that the warmth of a graph $G$ depends on whether $G$ has all possible complete bipartite subgraphs of a given size.  We end in Section \ref{sec:further} with some further questions and discussions.

\medskip
\noindent
{\bf Acknowledgments.}  The authors wish to thank Matthew Kahle for fruitful discussions, as well as an anonymous referee for helpful suggestions.

\section{Basics}\label{sec:Basics}

In this section we collect some basic definitions and results that will be needed for our work.  Most of our terminology follows \cite{BriWin} and \cite{BabKoz}.

A graph $G$ consists of a set of vertices (or nodes) $V(G)$ along with a collection of undirected edges $E(G)$.  All graphs in this paper are assumed to have no multiple edges, but may in some cases have loops.  We will always insist that $G$ has at least one edge and will typically assume that $G$ is connected, unless specified otherwise. The \emph{neighborhood} of a node $v\in V(G)$ is defined as $N(v)=\{u\in V(G) : vu\in E(G)\}$, and the neighborhood of a set $A\subseteq V(G)$ is defined as $N(A)=\cup_{v\in A}N(v)$.  A \emph{graph homomorphism} (or graph map) $f:G \rightarrow H$ is a mapping of the vertex set $f:V(G) \rightarrow V(H)$ that preserves adjacency, so that if $\{v,w\} \in E(G)$ then $\{f(v),f(w)\} \in E(H)$.

\begin{definition}
The chromatic number $\chi(G)$ of a graph $G$ is defined as
\[\chi(G) = \min\{n : \textrm{There exists a graph homomorphism $G \rightarrow K_n$} \}.\]
\end{definition}

Here we use $K_n$ to denote the complete graph consisting of $n$ vertices and all possible (non loop) edges.  We let $T^d$ denote the rooted infinite $d$-branching tree, an infinite connected graph consisting of a distinguished node $r$ with degree $d$, with all other vertices having degree $d+1$ (see Figure \ref{Fig:T2}).  In particular, $T^1$ is a rooted infinite path.  

\begin{figure}[h]
\begin{center}
  \includegraphics[scale = 0.35]{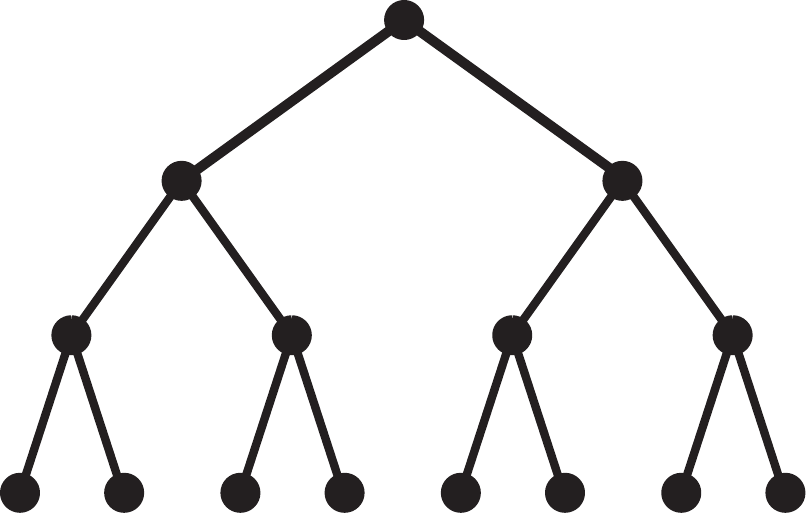}

    \caption{(A piece of) the graph $T_2$.}
\label{Fig:T2}
\end{center}
\end{figure}

\noindent

\subsection{Warmth}
We first address the warmth of a graph as discussed in the introduction.  We begin with some basic definitions form \cite{BriWin}.

\begin{definition}
A graph map $\varphi:T^d \rightarrow G$ is \emph{cold} if there exists a node $a \in V(G)$ such that for any integer $k$, there does not exist a graph map $\psi:T^d \rightarrow G$ such that $\psi$ agrees with $\varphi$ on the vertices of $T^d$ at distance $k$ from the root $r$ but has $\psi(r) = a$.  
\end{definition}

One can see that if $\varphi:T^d \rightarrow G$ is cold then there exists a cold map $\varphi^\prime:T^{d+1} \rightarrow G$, simply by extending $\varphi$ in an arbitrary way.  We are then lead to the following definition.

\begin{definition}
For a graph $G$ we define its \emph{warmth} $\zeta(G)$ according to
\[\zeta(G) = \min\{d: \textrm{There exists a cold map $\varphi: T^{d-1} \rightarrow G$}.\}\]
\end{definition}

In this context we will always assume that $G$ has at least one edge. In particular, the set of maps $T^d\to G$ is non-empty, so by definition $\zeta(G) \geq 2$.  There is a nice characterization of warmth from \cite{BriWin} that will often be more convenient to work with.   For this we need the following.

\begin{definition}
A nonempty family $\mathcal{A}$ of nonempty proper subsets of $V(G)$ is called \emph{$d$-stable} (in $G$) if, for every $A\in\mathcal{A}$ there exists a subfamily $\{A_i\}_{i=1}^d\subseteq\mathcal{A}$ of size $d$, such that
\[\bigcap_i N(A_i)=A.\]
\end{definition}

We then have the following characterization of warmth, again from \cite{BriWin}.
\begin{prop}
If $\zeta(G)$ denotes the warmth of a graph $G$, we have $\zeta(G) \leq d+1$ if and only if there exists a $d$-stable family in $G$.
\end{prop}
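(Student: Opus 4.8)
The plan is first to rephrase the statement combinatorially. By definition $\zeta(G)\le d+1$ means there is a cold map $\varphi\colon T^{d'-1}\to G$ for some $d'\le d+1$, and since (as remarked above) a cold map $T^j\to G$ yields a cold map $T^{j+1}\to G$, this is equivalent to the existence of a cold map $\varphi\colon T^d\to G$. So it suffices to show that $G$ admits a cold map from $T^d$ if and only if $G$ carries a $d$-stable family. Throughout I use that every vertex of $T^d$ has exactly $d$ children (the root having no parent); and I may assume the members of a $d$-stable family are nonempty, since deleting empty members keeps the family $d$-stable (if $\bigcap_i N(A_i)=A\ne\emptyset$ then each $A_i\ne\emptyset$), and I take the family itself to be nonempty, in line with the rest of the paper.

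For the implication ``$d$-stable family $\Rightarrow$ cold map'', fix a $d$-stable family $\mathcal{A}$, an element $A^*\in\mathcal{A}$, and --- using that $A^*\subsetneq V(G)$ --- a vertex $a\in V(G)\setminus A^*$. I first build, top-down, a labelling $L\colon V(T^d)\to\mathcal{A}$: set $L(r)=A^*$, and whenever $L(v)=A$ has been fixed, choose by $d$-stability a $d$-tuple $A_1,\dots,A_d\in\mathcal{A}$ with $\bigcap_i N(A_i)=A$ and assign $L$ of the $d$ children of $v$ to be $A_1,\dots,A_d$. Then $L(v)=\bigcap_{v'}N(L(v'))$ over the children $v'$ of $v$, so every $x\in L(v)$ has a neighbour in each $L(v')$; this lets me build, again top-down, a genuine homomorphism $\varphi\colon T^d\to G$ with $\varphi(v)\in L(v)$ for all $v$ (take $\varphi(r)\in A^*$ arbitrarily, and having fixed $\varphi(v)$, for each child $v'$ pick a neighbour $\varphi(v')\in L(v')$ of $\varphi(v)$). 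For each $v$ let $U_k(v)$ be the set of possible values at $v$ of a homomorphism agreeing with $\varphi$ on all descendants of $v$ at distance $k$ from $v$; these obey $U_0(v)=\{\varphi(v)\}$ and $U_k(v)=\bigcap_{v'}N(U_{k-1}(v'))$. An induction on $k$ (simultaneously over all $v$) shows $U_k(v)\subseteq L(v)$, so in particular $U_k(r)\subseteq L(r)=A^*$ for all $k$; hence $a$ is never a possible value at the root, so $\varphi$ is cold.

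For the converse, start from a cold map $\varphi$ and use the same candidate sets $U_k(v)$. A short induction now gives $U_k(v)\subseteq U_{k+1}(v)$ (the base case being $\varphi(v)\in N(\varphi(v'))$ for each child $v'$), so by finiteness of $G$ the sets stabilise to $M(v):=\bigcup_k U_k(v)$, which satisfy $M(v)=\bigcap_{v'}N(M(v'))$ and contain $\varphi(v)$. Moreover $U_k(r)$ is exactly the set of vertices $b$ such that some homomorphism agreeing with $\varphi$ at distance $k$ from $r$ takes the value $b$ at $r$, so coldness of $\varphi$ is precisely the statement $M(r)\ne V(G)$. Thus the family $\{M(v):v\in V(T^d)\}$ has the essential closure property --- each $M(v)$ is an intersection of $d$ sets $N(M(v'))$ drawn from the family --- and the only remaining point is that a $d$-stable family must consist of \emph{proper} subsets.

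The one real obstacle is that some $M(v)$ may equal $V(G)$. I dispose of isolated vertices first: if $G$ has a vertex with no neighbours, then every map $T^d\to G$ is cold (that vertex can never be the image of the root) so $\zeta(G)=2$ and both sides of the equivalence hold; hence assume $G$ has no isolated vertex, so $N(V(G))=V(G)$. Then $M(v)\ne V(G)$ forces at least one child $v'$ with $M(v')\ne V(G)$, since otherwise $M(v)=\bigcap_{v'}N(V(G))=V(G)$. Consequently the set of vertices reachable from $r$ along vertices with $M\ne V(G)$ is nonempty and closed under passing to such children, and $\mathcal{A}:=\{M(v):v\text{ reachable this way}\}$ is the desired $d$-stable family of nonempty proper subsets: for reachable $v$, the children $v'$ with $M(v')\ne V(G)$ are again reachable and contribute $N(M(v'))$ to the intersection, while the children with $M(v')=V(G)$ contribute $N(V(G))=V(G)$ and may be dropped, and padding the remaining (at least one) sets $M(v')$ with repeated copies to a $d$-tuple exhibits $M(v)$ as $\bigcap_i N(A_i)$ with all $A_i\in\mathcal{A}$. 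The remaining bookkeeping around degenerate cases --- loops, the empty set, and whether a ``subfamily of size $d$'' may have repeated entries --- I would settle by the conventions consistent with the rest of the paper.
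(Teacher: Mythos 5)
The paper states this proposition as a citation to Brightwell and Winkler [BriWin] and does not supply its own proof, so there is no internal argument to compare against. Your reconstruction is, as far as I can tell, correct and follows the natural (and, I believe, the original Brightwell--Winkler) approach: the recursion $U_k(v)=\bigcap_{v'}N(U_{k-1}(v'))$ over the $d$ children of $v$ computes the set of admissible values at $v$ given the constraint at distance $k$; the labeling $L$ yields the monotone bound $U_k(v)\subseteq L(v)$ in the forward direction (so the excluded vertex $a\notin A^*$ witnesses coldness); and the increasing sequence $U_k(v)\subseteq U_{k+1}(v)$ stabilizing to genuine fixed points $M(v)=\bigcap_{v'}N(M(v'))$ gives the candidate family in the reverse direction, with coldness of $\varphi$ exactly equivalent to $M(r)\neq V(G)$. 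One simplification: the isolated-vertex case need not be split off. If $G$ has an isolated vertex then $N(V(G))\subsetneq V(G)$, and since each $M(v)\subseteq N(M(v'))\subseteq N(V(G))$, every $M(v)$ is automatically a proper (and nonempty, as it contains $\varphi(v)$) subset, so the general argument, including your reachability trimming, already covers it without the detour through ``both sides hold trivially'' --- which, as written, quietly assumes a $d$-stable family exists there without exhibiting one. The conventions you flag at the end (excluding $\emptyset$ from a $d$-stable family, and reading ``subfamily of size $d$'' as permitting repetitions) are indeed the intended ones; the paper's own follow-up example of a one-element $1$-stable family confirms the latter.
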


For example, the warmth of a disconnected graph is always $2$, a trivial 1-stable family consists of only one set (namely one of the components). Almost as trivially, a bipartite graph has warmth 2, as a 1-stable family is given by the two parts of the bipartition.

A slightly more delicate example is the fact that a graph $G$ with girth $g\geq 5$ always has $\zeta(G)\leq 3$. Indeed, let $a_0\cdots a_{g-1} a_0$ be a minimal cycle of $G$, and let \[\mathcal{A}=\{\{a_i\}: i=0,\dots , g-1\}.\] Then $\mathcal{A}$ is $2$-stable, as we have $N(a_{i-1})\cap N(a_{i+1})=\{a_i\}$ (counted modulo $g$). 

The main result regarding warmth from \cite{BriWin} is the following connection to chromatic number.
\begin{theorem}\label{thm:warmthbound}%(Warmth is a lower bound on chromatic number)
For any graph $G$ we have
\[\zeta(G) \leq \chi(G).\]
\end{theorem}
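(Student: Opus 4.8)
The plan is to work entirely through the $d$-stable family reformulation of the preceding Proposition: writing $n = \chi(G)$, it suffices to produce an $(n-1)$-stable family in $G$, since the Proposition then yields $\zeta(G)\le n$. We may assume $G$ is connected, since a disconnected graph has warmth $2\le\chi(G)$ as already observed. Fix a proper coloring $c\colon V(G)\to\{1,\dots,n\}$; because $n$ is minimal, $c$ is surjective, so each color class $V_i = c^{-1}(i)$ is a nonempty proper subset of $V(G)$.

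I would first dispose of the case where $c$ is \emph{rainbow}, meaning every vertex has a neighbor in each of the $n-1$ color classes other than its own. In that case the family $\mathcal{A}=\{V_1,\dots,V_n\}$ is $(n-1)$-stable, with $\{V_j: j\ne i\}$ witnessing $V_i$: the inclusion $\bigcap_{j\ne i}N(V_j)\subseteq V_i$ holds for any proper coloring, since a vertex of color $k\ne i$ has no neighbor in $V_k$ and so is absent from the factor $N(V_k)$; and $V_i\subseteq\bigcap_{j\ne i}N(V_j)$ is precisely the rainbow hypothesis at each vertex of $V_i$. In particular this already recovers the estimate $\zeta(K_n)\le n$, applied to the identity coloring of $K_n$, which is rainbow.

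The substance of the theorem is the general case, and this is where I expect the main obstacle to lie: for an arbitrary $G$ with $\chi(G)=n$ there need not be any rainbow $n$-coloring — the cycle $C_5$ is already such an example, and there the color classes of a $3$-coloring do \emph{not} form a $2$-stable family, although the family of all vertex-singletons does, by the girth argument recorded above. The plan for the general case is therefore to enlarge and correct the family: after first folding $G$ down to a stiff graph (legitimate, since warmth is invariant under folds), one would take for $\mathcal{A}$ a suitable collection of ``small'' sets — for instance all singletons $\{v\}$ expressible as $\bigcap_{u\in S}N(u)$ for some set $S$ of at most $n-1$ neighbors of $v$, together with any further sets needed to close $\mathcal{A}$ under the witnessing operation — and then argue that the hypothesis $G\to K_n$ forces every member of $\mathcal{A}$ to admit a witnessing subfamily of size $n-1$. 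Carrying out this last step is the crux: it is precisely the point at which one must convert a coloring of $G$ into the purely local intersection patterns demanded by $(n-1)$-stability, and it appears to require the structural facts of \cite{BriWin} about the behavior of warmth under folding and categorical products, rather than a single direct combinatorial construction.
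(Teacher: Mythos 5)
The paper itself does not prove Theorem~\ref{thm:warmthbound}; it is quoted from Brightwell--Winkler~\cite{BriWin} without proof, so there is no internal argument to compare against. Your rainbow case is correct and cleanly argued: if $c$ is a proper $n$-coloring in which every vertex has a neighbor of each other color, then the color classes $\{V_1,\dots,V_n\}$ do form an $(n-1)$-stable family, and both inclusions are verified correctly (the containment $\bigcap_{j\ne i}N(V_j)\subseteq V_i$ really does hold for every proper coloring, rainbow or not). But, as you yourself flag, the rainbow hypothesis generally fails --- $C_5$ already has no rainbow $3$-coloring --- and the general case is where the theorem actually lives.

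That gap is real and is not a matter of polishing. Your proposed repair (fold to a stiff graph, then collect singletons $\{v\}$ expressible as $\bigcap_{u\in S}N(u)$ with $|S|\le n-1$, and ``close under witnessing'') leaves the essential step unproved: you never show that the closure process terminates inside the family of \emph{proper} subsets, nor that every member --- especially the newly adjoined ones --- acquires $n-1$ witnesses that are themselves members of $\mathcal{A}$. A singleton $\{v\}=\bigcap_{i}N(u_i)$ needs the sets $\{u_i\}$ (or suitable supersets) to lie in $\mathcal{A}$, and those in turn must be witnessed; this recursion is exactly what an $(n-1)$-stable family controls, so asserting closure amounts to assuming what is to be proved. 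Moreover, the coloring $c\colon G\to K_n$ enters your sketch only through the rainbow calculation, which does not extend; you have not explained how the hypothesis $\chi(G)=n$ constrains the local intersection patterns in the non-rainbow case. As it stands, the proposal proves the theorem only under the additional rainbow assumption. You have correctly located the crux, but the argument needed to cross it --- the actual content of Brightwell and Winkler's proof --- is missing.
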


\subsection{Hom and Neighborhood complexes}

In his proof of the Kneser conjecture \cite{Lov}, Lov\'asz introduced the so-called neighborhood complex $N(G)$ of a graph $G$ and showed that the topology of $N(G)$ provided a lower bound on the chromatic number of $G$.  It turns out that $N(G)$ is an example of a more general homomorphism complex $\hom(T,G)$ parametrizing graph homomorphisms from $T$ to $G$.  It can be shown that $N(G)$ is homotopy equivalent to $\hom(K_2,G)$, and it is the latter construction that we will need for our purposes \cite{BabKoz}.  For the following we use $\Delta_S$ to denote the simplex on vertex set $S$.

\begin{definition}
For a finite graph $G$, $\hom(K_2, G)$ is the subcomplex of $\Delta_{V(G)} \times \Delta_{V(G)}$ consisting of all faces $\sigma \times \tau$ with $\sigma, \tau \neq \emptyset$, $\sigma, \tau \subseteq V(G)$, and such that if $v \in \sigma,  w \in \tau$, then $\{v,w\} \in E(G)$.
\end{definition}

If $\sigma=\{s\}$ and $\tau=\{t\}$ are both singletons, we use the shorthand notation $(s,t)$ to denote the vertex $\sigma\times\tau$.  See Figure \ref{Fig:homcomplex}

\begin{figure}[htb]
\begin{center}
  \includegraphics[scale = 0.175]{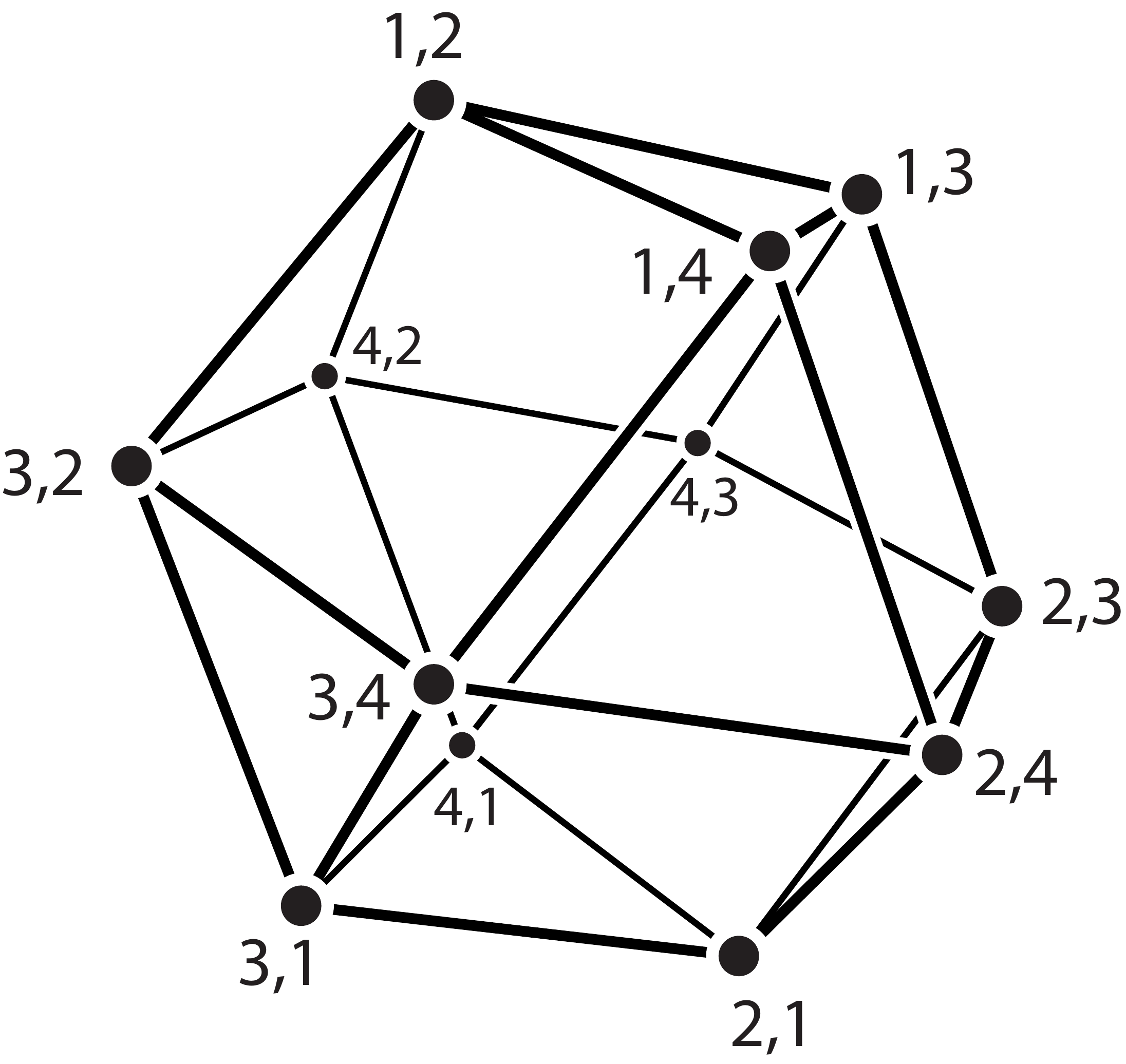}

    \caption{The polyhedral complex $\hom(K_2, K_4)$.  The vertices (which each correspond to a homomorphism $K_2 \rightarrow K_4$) are labeled by the image of each vertex of $K_2$.}
\label{Fig:homcomplex}
\end{center}
\end{figure}

We note that $\hom(K_2, G)$ has 0-cells given by the directed edges of $G$, and all faces of $\hom(K_2,G)$ are products of simplices.  The main result that we will need, originally from \cite{Lov}, is the following theorem.

\begin{theorem}\label{thm:topbound} %(Connectivity of $\hom(K_2,G)$ is a lower bound on chromatic number)
For any finite graph $G$ we have 
\[\conn(\hom(K_2,G)) + 3 \leq \chi(G). \]
\end{theorem}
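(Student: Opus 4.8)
The plan is to run Lov\'asz's equivariant-topology argument in the $\hom$-complex language. We may assume $2\le\chi(G)=n<\infty$, since otherwise the inequality is vacuous (if $G$ has no edge then $\hom(K_2,G)=\emptyset$, and if $G$ has a loop then $\chi(G)=\infty$); in particular $G$ is loopless. The key structure is the $\mathbb{Z}_2$-action on $\hom(K_2,G)$ given by the transposition $\sigma\times\tau\mapsto\tau\times\sigma$ of the two factors of $\Delta_{V(G)}\times\Delta_{V(G)}$ — geometrically, reversing the direction of an edge. This is a cellular action, and it is natural in $G$: a coloring, i.e.\ a graph homomorphism $c\colon G\to K_n$, sends a cell $\sigma\times\tau$ of $\hom(K_2,G)$ to $c(\sigma)\times c(\tau)$, which is a cell of $\hom(K_2,K_n)$ because $c(v)\neq c(w)$ whenever $\{v,w\}\in E(G)$; the induced affine maps on cells glue to a cellular map
\[\Phi\colon \hom(K_2,G)\longrightarrow \hom(K_2,K_n)\]
commuting with the two transposition actions (functoriality of $\hom(K_2,-)$ is standard, cf.\ \cite{BabKoz}).

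Next I would verify that both actions are free. A fixed point of the transposition on $\hom(K_2,H)$ must lie in the relative interior of a cell of the form $\sigma\times\sigma$; but such a cell requires $\{v,v\}\in E(H)$ for $v\in\sigma$, i.e.\ a loop. Since $G$ and $K_n$ are loopless, no such cell exists, so $\hom(K_2,G)$ and $\hom(K_2,K_n)$ are both \emph{free} $\mathbb{Z}_2$-CW-complexes. I would also record that $\dim\hom(K_2,K_n)=n-2$: every cell of $\hom(K_2,K_n)$ is $\Delta_A\times\Delta_B$ for disjoint nonempty $A,B\subseteq[n]$, of dimension $|A|+|B|-2\le n-2$, with equality attained. (Equivalently, $\hom(K_2,K_n)$ is the sphere $S^{n-2}$, as recalled in the introduction.)

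Now suppose, for contradiction, that $\conn(\hom(K_2,G))\ge n-2$. Then $\Phi$ is a $\mathbb{Z}_2$-map from the $(n-2)$-connected free $\mathbb{Z}_2$-complex $\hom(K_2,G)$ into the free $\mathbb{Z}_2$-complex $\hom(K_2,K_n)$ of dimension $n-2$. By Dold's theorem — the Borsuk--Ulam theorem for free $\mathbb{Z}_2$-spaces, which says a $\mathbb{Z}_2$-map out of an $m$-connected free $\mathbb{Z}_2$-complex forces the target to have dimension at least $m+1$ — we would need $n-2\ge(n-2)+1$, a contradiction. Hence $\conn(\hom(K_2,G))\le n-3$, which is exactly $\conn(\hom(K_2,G))+3\le\chi(G)$.

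The only substantive ingredient is the equivariant-topology black box (Dold's theorem; equivalently, an $(n-2)$-connected free $\mathbb{Z}_2$-complex admits a $\mathbb{Z}_2$-map from $S^{n-1}$ by obstruction theory, which composed with $\Phi$ and an equivariant homotopy equivalence $\hom(K_2,K_n)\simeq S^{n-2}$ would produce an equivariant map $S^{n-1}\to S^{n-2}$, impossible by Borsuk--Ulam). I therefore expect the main obstacle to be purely bookkeeping rather than conceptual: namely, checking carefully that the transposition action is genuinely free — this is precisely where looplessness of $G$ (hence finiteness of $\chi(G)$) is used — and that $\Phi$ is honestly cellular and $\mathbb{Z}_2$-equivariant, so that the topological lemma can be applied.
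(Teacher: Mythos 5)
The paper does not actually prove this theorem --- it is quoted from Lov\'asz \cite{Lov} and its modern treatments \cite{BabKoz, DocSch} --- and your proposal is precisely the standard argument from that literature: functoriality of $\hom(K_2,-)$ applied to a coloring $G\to K_n$, freeness of the flip action for loopless graphs, $\dim\hom(K_2,K_n)=n-2$, and Dold's theorem. The outline is correct, including the two points you flag as bookkeeping (a fixed point would have to lie in a cell $\sigma\times\sigma$, which requires a loop, and the induced map is affine on cells and commutes with the flip), so modulo citing Dold's theorem as a black box there is no gap.
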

%%When $G$ is a finite graph, and let $h=h(G)$ be such that $\tilde{H}_i(\hom(K_2,G),\Z)=0$ if $i<h$ and $\tilde{H}_h(\hom(K_2,G),\Z)\neq 0$. %Maybe replaced by ordinary connectivity. Maybe homology over $\Z_2$???

%Our primary interest is to relate the two parameters discussed in this section, via Conjecture~\ref{mainconj}. As far as we know, Conjecture~\ref{mainconj} was first mentioned in~\cite{FadKah}.

%DISCUSS RELATION BETWEEN HOMOTOPY AND HOMOLOGY CONNECTIVITY.

%%\begin{conj}\label{mainconj}
% Let $G$ be a finite graph. Then $\zeta(G)\leq \conn(\hom(K_2,G))+3.$.
%\end{conj}

\begin{remark}
If a graph $G$ has multiple connected components (each with a nonzero number of edges) then both our parameters collapse, in the sense that $\zeta(G) = 2$, and $\hom(K_2,G)$ is disconnected.  At the same time, the chromatic number of $G$ is simply the maximum of the chromatic number of its components.  Hence to reasonably study Conjecture \ref{mainconj} we make the assumption that $G$ is a \emph{connected} graph.
\end{remark}

\section{2-stable families from the fundamental group} \label{sec:Fundamental}

The first case of Conjecture \ref{mainconj}, for the smallest possible values of the parameters, is not hard to establish.  As we noted in the introduction,if $G$ is a (nonempty, connected) graph we have
\[ \zeta(G) = 2 \Leftrightarrow (\conn(\hom(K_2,G)) = -1) .\]
\noindent
Indeed if $G$ is connected then $\zeta(G) = 2$ if and only if $G$ is bipartite.  At the same time a connected graph $G$ is bipartite if and only if $\hom(K_2,G)$ is disconnected (and non-empty), which by definition means that $\conn(\hom(K_2, G)) = -1$. 

Hence if $G$ is a connected graph with $\zeta(G) = 3$ we know that $G$ is not bipartite and $\conn(\hom(K_2, G)) \geq 0$.  So the first possible counterexample to Conjecture \ref{mainconj} would be a non-bipartite, connected graph $G$ satisfying $\zeta(G) = 4$ such that $\hom(K_2,G)$ has a non-trivial fundamental group, which would be implied by a nonzero first homology group.  Our first main result rules out this possibility, establishing the first nontrivial case of Conjecture \ref{mainconj} and unifying the different ways to provide the bound $\chi(G)\geq 4$.  

%On the topological side we are interested in the fundamental group of $\hom(K_2,G)$.  
Our approach will involve a calculation of the first homology group $H_1(\hom(K_2,G)) = H_1(\hom(K_2,G); {\mathbb Z})$, whose generators we can describe explicitly.  For this purpose consider any even cycle \[\gamma=\cdots\dash a_{i-1}\dash b_i\dash a_i \dash b_{i+1}\dash \cdots\] of length $2n$ in $G$. We associate to $\gamma$ the $1$-chain $c(\gamma)\in C_1(\hom(K_2,G))$, defined by 
\[c(\gamma)\cq\sum_{i=0}^{n-1}\epsilon_i \left(\{a_i\} \times \{b_i, b_{i+1}\}\right) + \sum_{i=0}^{n-1} \delta_i \left(\{a_{i-1}, a_i\} \times \{b_i\}\right) ,\] where the signs $\epsilon,\delta\in\{\pm1\}$ are chosen such that \[\partial\left(\{a_i\} \times \{b_i, b_{i+1}\}\right)=\epsilon_i\cdot \left((a_i,b_{i+1})-(a_i,b_i)\right)\]  and \[\partial\left(\{a_{i-1},a_i\} \times \{b_i\}\right)=\delta_i \cdot\left((a_i,b_i)-(a_{i-1},b_i)\right).\]
It is now clear that $\partial c(\gamma)=0$, so $c(\gamma)$ is a cycle in the chain complex of $\hom(K_2,G)$. Moreover, we claim that every homology class $x\in H_1(\hom(K_2,G))$ has some representative of this form. 

\begin{lemma}
Let $x\in H_1(\hom(K_2,G))$. Then there exist even cycles $\gamma_1,\dots , \gamma_r$ in $G$ such that $x=[c(\gamma_1) +\cdots + c(\gamma_r)]$.
\end{lemma}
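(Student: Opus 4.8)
The plan is to show that the classes $[c(\gamma)]$ coming from even cycles $\gamma$ span $H_1(\hom(K_2,G))$. I would work at the level of simplicial (or rather polyhedral) chains, using the fact that the product-of-simplices faces of $\hom(K_2,G)$ have a combinatorial description in terms of directed complete bipartite subgraphs of $G$. First I would fix a maximal tree $T$ in the $1$-skeleton of $\hom(K_2,G)$: the $0$-cells are the directed edges $(v,w)$ of $G$, and the $1$-cells are of the two types $\{v\}\times\{w,w'\}$ and $\{v,v'\}\times\{w\}$ appearing in the definition of $c(\gamma)$. Every $1$-cycle is homologous to an integer combination of the fundamental cycles of $T$, i.e.\ one per edge of $\hom(K_2,G)$ not in $T$, so it suffices to realize each such fundamental $1$-cycle (up to sign, and up to boundaries) as a sum of $c(\gamma)$'s.

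The key step is to recognize that each of these elementary generating cycles is itself (homologous to) the chain of a short even closed walk in $G$, and that $c$ is additive enough that concatenation of walks corresponds to addition of chains modulo boundaries. Concretely: a $1$-cell $\{v\}\times\{w,w'\}$ lies in the boundary of higher cells only when $v$ has two neighbors $w,w'$; the cycle $c$ of the ``bigon'' walk $v\dash w\dash v\dash w'\dash v$ is exactly $\{v\}\times\{w,w'\}+\{v,v\}\times\{w\}-\ldots$, but since we cannot have the degenerate edge $\{v,v\}\times\{w\}$ unless $v$ has a loop, I would instead argue by pushing an arbitrary $1$-cycle $z$ across $2$-cells: every $2$-cell of $\hom(K_2,G)$ has the form $\{v\}\times\{w,w',w''\}$, $\{v,v',v''\}\times\{w\}$, or $\{v,v'\}\times\{w,w'\}$ (a square), and in each case its boundary, read off, is visibly of the form $\pm c(\gamma)$ for a $4$- or $6$-cycle $\gamma$ (or a sum of such, plus lower terms that cancel). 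So the relations imposed on $H_1$ are all among the classes $[c(\gamma)]$, and conversely any $1$-cycle, after subtracting a suitable boundary, becomes supported on the distinguished edge-types and is then directly read as $\sum c(\gamma_i)$ by tracing out its closed walks. The cleanest way to package this is: (i) show $C_1(\hom(K_2,G))$ is generated by the two distinguished edge-types modulo $\partial C_2$; (ii) show a $1$-cycle supported on these edge-types decomposes as a sum of $c(\gamma)$'s for even closed walks $\gamma$; (iii) note an even closed walk can be decomposed into even cycles, and $c$ is additive under this decomposition up to boundaries (a walk that backtracks contributes a boundary term).

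The main obstacle I anticipate is step (i)/(ii) — controlling the chain-level bookkeeping for the ``square'' $2$-cells $\{v,v'\}\times\{w,w'\}$, whose boundary mixes both edge-types and which is what actually forces a generic $1$-cycle into the shape of a $c(\gamma)$. One has to be careful that the cross terms (the $\{v,v'\}\times\{w\}$ pieces with $i$-indices not matching up) telescope correctly when a long cycle is cut into $4$- and $6$-gons; orientations and the modular index $i-1$ in the second sum of $c(\gamma)$ need to be tracked consistently. A slicker alternative, which I would try first, is to avoid the combinatorics entirely: use that $\hom(K_2,G)$ has the homotopy type of $N(G)$, and that $\pi_1$ of a complex is generated by loops through the $1$-skeleton, so $H_1$ is generated by cycles supported on the $1$-skeleton of $\hom(K_2,G)$; such a cycle traces a closed walk alternating between the two ``colors'' of a directed edge, which is precisely an even closed walk in $G$, and then invoke the even-walk-to-even-cycles decomposition plus additivity of $c$. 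Either way, the theorem reduces to the single verification that $\partial c(\gamma)=0$ (already noted) together with the surjectivity of $c$ onto $H_1$, and I'd present the argument via the $1$-skeleton route with the square-cell relations quarantined into one lemma-internal calculation.
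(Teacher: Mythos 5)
Your ``slicker alternative'' is essentially the paper's proof: $H_1$ is carried by the $1$-skeleton, whose $1$-cells are exactly the two edge types $\{a\}\times\{b,b'\}$ and $\{a,a'\}\times\{b\}$; one normalizes a representative $1$-cycle using $2$-cell boundaries (the paper only needs the triangles $\{a,a',a''\}\times\{b\}$, resp.\ $\{a\}\times\{b,b',b''\}$) so that, after also restricting attention to a connected piece of the support, the chain traces out closed walks that strictly alternate the two edge types; such an alternating closed walk is precisely an even closed walk $\gamma$ in $G$, and the chain you started from is $\sum c(\gamma_i)$. Your primary framing via a spanning tree and fundamental cycles would also work, but it does not buy you the alternation for free---fundamental cycles need not alternate edge types---so you would still need the triangle-pushing step, which is the real content; the paper skips the tree and works with a minimal-length representative directly.

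One step in your outline is both unnecessary and, as stated, false: step (iii), ``decompose an even closed walk into even cycles.'' Two laps around $C_5$ form an even closed walk, but $C_5$ contains no even simple cycle, so such a decomposition does not exist in general. This is harmless because the lemma's ``even cycles'' are explicitly allowed (see the paper's Remark) to revisit vertices; i.e.\ they are even closed walks. Once your argument has reached ``$x$ is a sum of $[c(\gamma)]$ over alternating closed walks $\gamma$,'' you are already finished, and pursuing a further reduction to simple cycles would be a wrong turn.
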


\begin{remark}
The cycles $\gamma_i$ are allowed to visit the same node several times, and may actually consist of two laps around an odd cycle in the graph. This  is, for example, what will happen if $G$ itself is an odd cycle.
\end{remark}

\begin{proof} 
Since the chain group $C_1(\hom(K_2,G);\mathbb{Z)})$ is generated by edges $\{a,a'\}\times\{b\}$ and $\{a\}\times\{b,b'\}$, we can write $x=[c]$, where $c\in C_1(\hom(K_2,G);\mathbb{Z})$ is a cycle with
\[\begin{aligned} c & =\sum_{i\in I_+} \left(\{a_i\} \times \{b_i, b_i'\}\right) - \sum_{i\in I_-} \left(\{a_i\} \times \{b_i, b_i'\}\right)\\ & + \sum_{j\in J_+} \left(\{a_j, a_j'\} \times \{b_j\}\right) - \sum_{j\in J_-} \left(\{a_j, a_j'\} \times \{b_j\}\right).\end{aligned}\]
We label the vertices such that \[\partial\left(\{a_i\} \times \{b_i, b_i'\}\right)=\left\{\begin{aligned}(a_i,b_i)-(a_i, b_i') \mbox{ if } i\in I_+ \\ (a_i,b_i')-(a_i, b_i) \mbox{ if } i\in I_- ,\end{aligned}\right.\] and similarily \[\partial\left(\{a_j, a_j'\} \times \{b_j\}\right)=\left\{\begin{aligned}(a_j,b_j)-(a_j', b_j) \mbox{ if } i\in I_+ \\ (a_j',b_j)-(a_j, b_j) \mbox{ if } i\in I_- .\end{aligned}\right.\] Now since $c$ is a cycle in $C_1(\hom(K_2,G);\mathbb{Z})$
\begin{equation}\label{bdry}0=\partial c=\sum_{i\in I} \left((a_i,b_i)-(a_i, b_i')\right) + \sum_{j\in J}\left((a_j,b_j)-(a_j', b_j)\right),\end{equation} where $I=I_+\cup I_-$ and $J=J_+\cup J_-$. %where we use $(a,b)$ as shorthand for $\{a\}\times\{b\}$.
%We may assume without loss of generality that the graph induced on \[\{a_i,b_i,b_i':i\in I\}\cup\{a_j,a_j',b_j:j\in J\}\] is connected, because otherwise $x$ can be written as a sum of cycles.

%Rearranging the identity \eqref{bdry} gives \begin{equation}\label{cycleid}\sum_{i\in I} (a_i, b_i') + \sum_{j\in J} (a_j', b_j) = \sum_{i\in I}  (a_i, b_i) + \sum_{j\in J} (a_j, b_j).\end{equation}

Observe that, if $\partial\{a,a'\}\times\{b\}=(a',b)-(a,b)$ and $\partial\{a',a''\}\times\{b\}=(a'',b)-(a',b)$, then $\{a,a'\}\times\{b\} + \{a',a''\}\times\{b\}$ is homologous to $\{a,a''\}\times\{b\}$, as their difference is the boundary of the triangle $\{a,a',a''\}\times\{b\}$. On the other hand, if $\partial\{a,a'\}\times\{b\}=(a',b)-(a,b)$ and $\partial\{a',a''\}\times\{b\}=(a',b)-(a'',b)$, then $\{a,a'\}\times\{b\} - \{a',a''\}\times\{b\}$ is again homologous to plus or minus $\{a,a''\}\times\{b\}$, as either their difference or their sum is the boundary of the triangle $\{a,a',a''\}\times\{b\}$.

Thus, for a homology generator $c$ of minimal length, there can be no term $(a,b)$ that occurs with both signs in $\sum_{i\in I} (a_i, b_i') - \sum_{i\in I}  (a_i, b_i)$, and by the same argument, no term occurs with both signs in $\sum_{j\in J} (a_j', b_j) -  \sum_{j\in J} (a_j, b_j)$. Consequently, it follows from \eqref{bdry} that \[\sum_{i\in I} (a_i, b_i') = \sum_{j\in J} (a_j,b_j)\] and \[\sum_{i\in I} (a_i, b_i) =\sum_{j\in J} (a_j', b_j).\]
%Projecting these identities to the first and second coordinates, we get the multiset identities
%$\{a_i\}_{i\in I} = \{a_j\}_{j\in J}=\{a_j'\}_{j\in J}$ and $\{b_j\}_{j\in J} = \{b_i\}_{i\in I}=\{b_i'\}_{i\in I}$. 

We therefore have two bijections \[\left\{\begin{aligned}\rho:I\to J & \mbox{ with } (a_i,b_i')=(a_{\rho(i)}, b_{\rho(i)}) & \mbox{ for } i\in I \\
\sigma:J\to I & \mbox{ with } (a_j',b_j)=(a_{\sigma(j)}, b_{\sigma(j)}) & \mbox{ for } j\in J, \end{aligned}\right.\] and a permutation $\phi=\sigma\circ\rho:I\to I$. By construction, $(a_i, b_i)$ and $(a_i,b_{\phi(i)})=(a_{\rho(i)}b_{\sigma(\rho(i))})=(a_{\rho(i)}b_{\rho(i)})$ are edges in $G$ for all $i\in I$.

We write $\phi=\phi_1\cdots \phi_r$ as a product of $r$ cyclic permutations on disjoint ground sets. Let $i\in I$ be an arbitrary representative of the cycle $\phi_s$, which has length $\ell=\ell_s$, $s=1,\dots, r$. Then $G$ contains a cycle \[a_i\dash b_{\phi(i)}\dash a_{\phi(i)}\dash b_{\phi^2(i)}\dash\cdots\dash a_{\phi^{\ell}(i)}=a_i,\] which we will denote by $\gamma_s$.  
Now, we introduce the signs $\epsilon_i$ and $\delta_i$ such that  \[\partial\left(\{a_i\} \times \{b_i, b_{\phi(i)}\}\right)=\epsilon_i\cdot \left((a_i,b_{\phi(i)})-(a_i,b_i)\right)\]  and \[\partial\left(\{a_{i},a_{\phi(i)}\} \times \{b_i\}\right)=\delta_i \cdot\left((a_{\phi(i)},b_i)-(a_i,b_i)\right).\]
We then have \[\begin{aligned} c(\gamma_1) + \cdots + c(\gamma_r) &= \sum_{i\in I}\epsilon^i(\{a_i, a_{\phi(i)}\}\times \{b_{\phi(i)}\}) + \sum_{i\in I}\delta^i(\{a_i\}\times \{b_i, b_{\phi(i)}\})
\\ & = \sum_{i\in I}\epsilon^i(\{a_{\rho(i)}, a_{\rho(i)}'\}\times \{b_{\rho(i)}\}) + \sum_{i\in I}\delta^i(\{a_i\}\times \{b_i, b_i'\})
\\ & =\sum_{i\in I_+} \left(\{a_i\} \times \{b_i, b_i'\}\right) - \sum_{i\in I_-} \left(\{a_i\} \times \{b_i, b_i'\}\right)\\ & + \sum_{j\in J_+} \left(\{a_j, a_j'\} \times \{b_j\}\right) - \sum_{j\in J_-} \left(\{a_j, a_j'\} \times \{b_j\}\right)
\\& = c, \end{aligned}\]
so $x=[c]=[ c(\gamma_1) + \cdots + c(\gamma_r)]$.

%A 1-cycle in $\hom(K_2,G)$ having two edges $\{a,a'\}\times\{b\}$ and $\{a',a''\}\times\{b\}$ is homologous to the same cycle with these two edges replaced by $\{a,a''\}\times\{b\}$, as their difference is the boundary of the triangle $\{a,a',a''\}\times\{b\}$. Thus a homology generator of minimal length must have $a_j\neq a_j'$ for all $j\in J$, and by the same argument $b_i\neq b_i'$ for all $i\in I$.

%  Now, the boundary identity from above gives 
%%, considering the (multi)set of elements appearing in the first coordinate of each term on both sides of the equation. 
%Hence starting from an arbitrary $i_1\in I$, the definition of $x$ gives us a path of length two 
%\[ a_{i_1}\dash b_{i_1}\dash a_{i_1}'.\]
%Since $a_{i_1}' = a_{i_2}$ for some $i_2 \in I$, we can then extend the path
%\[a_{i_1}\dash b_{i_1}\dash a_{i_2}\dash b_{i_2}\dash a_{i_2}' \dash\cdots.\]
%Again we have that $a_{i_2}' = a_{i_3}$ for some $i_3 \in I$, and continuing in this fashion, we observe that the  we obtain a cycle $\gamma$ (WHY IS IT A CYCLE?) such that  $X = c(\gamma)$.
%TO FIX:  Now it is easy to see that $x$ is the sum of $[c(\gamma)]$, for all $\gamma$ that can be obtained in this way from some choice of $i_1\in I$.
\end{proof}

Denote by $\ell(\gamma)$ the length of the cycle $\gamma$. Also, if $\pi=\pi_1\cdots\pi_a$ and $\pi'=\pi'_1\cdots\pi'_b$ are paths in $G$, where $\pi_a=\pi'_1$, we let $\pi\cdot \pi'$ denote their concatenation $\pi\cdot \pi'=\pi_1\cdots\pi_{a-1}\pi'_1\pi'_2\cdots\pi'_b$.
We are now ready to state our first theorem.

\begin{theorem}\label{thm:fundgroup}
Suppose $G$ is a graph such that $H_1(\hom(K_2,G))$ contains an infinite cyclic subgroup. Then $\zeta(G)\leq 3$.
\end{theorem}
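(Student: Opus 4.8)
The plan is to use the hypothesis that $H_1(\hom(K_2,G))$ contains an infinite cyclic subgroup to produce a $2$-stable family in $G$; by the characterization of warmth (Proposition in Section~\ref{sec:Basics}), this gives $\zeta(G) \leq 3$. By the preceding lemma, an infinite-order class $x$ is represented by $c(\gamma_1) + \cdots + c(\gamma_r)$ for even cycles $\gamma_i$ in $G$, and infinite order means that no nonzero multiple $N x$ bounds. I want to extract from this a single closed walk $W = w_0 \dash w_1 \dash \cdots \dash w_{2m} = w_0$ in $G$, of even length, with the property that $N(w_{i-1}) \cap N(w_{i+1}) = \{w_i\}$ for every $i$ (indices mod $2m$); this is exactly the girth-$\geq 5$ style argument quoted in the Basics section, and the family $\mathcal{A} = \{\{w_i\}\}$ is then $2$-stable. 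The point is that if two neighbors along the walk shared a second common neighbor, one could ``shortcut'' the walk, and iterating such shortcuts on the chain level would reduce the homological complexity; the infinite order of $x$ must obstruct this collapse from going all the way down.

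The key steps, in order. First, I would set up the correspondence between even cycles $\gamma$ in $G$ and the $1$-cycles $c(\gamma)$ carefully, tracking how $\ell(\gamma)$ and the concatenation operation $\pi \cdot \pi'$ interact with the chain map $c$: concretely, establishing an identity of the form $c(\gamma \cdot \gamma') \sim c(\gamma) + c(\gamma')$ (homologous) when the cycles share a common sub-path, and understanding when $c(\gamma)$ is a boundary. Second, I would show that if $\gamma$ is an even cycle in which some vertices $w_{i-1}, w_{i+1}$ have a common neighbor $u \neq w_i$, then $c(\gamma)$ is homologous to $c(\gamma')$ for a strictly shorter even closed walk $\gamma'$ (replace the detour $w_{i-1}\dash w_i \dash w_{i+1}$ by $w_{i-1}\dash u \dash w_{i+1}$, or delete a now-redundant segment), where the relevant $2$-cells of $\hom(K_2,G)$ realizing the homology come from the complete bipartite subgraphs on $\{w_{i-1},w_i,w_{i+1}\} \times \{\text{appropriate neighbors}\}$. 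Third, starting from a minimal-length representative $c(\gamma_1)+\cdots+c(\gamma_r)$ of (a suitable multiple of) $x$, I would argue that minimality forces each $\gamma_i$ to have the no-common-neighbor property along consecutive vertices; were it to fail, step two produces a shorter representative of the same class, contradicting minimality — and crucially, since $x$ has infinite order, this reduction cannot terminate by having the whole chain become a boundary (which is what would happen for a torsion or trivial class and is where the hypothesis is essential). Finally, from such a reduced even closed walk $\gamma_i$ — necessarily of length $\geq 6$, since a closed walk of length $2$ or $4$ always has a repeated-neighbor obstruction or bounds — I read off the $2$-stable family $\{\{w_i\}\}$ and conclude $\zeta(G) \leq 3$.

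I expect the main obstacle to be the third step: making precise why an \emph{infinite-order} class, as opposed to an arbitrary nonzero one, survives the length-reduction process all the way down to a ``geodesic'' closed walk. The shortcut moves preserve the homology class, but I need to ensure the reduction is well-founded (the total length of the representing chain strictly decreases and is bounded below) and that the terminal object is genuinely a nontrivial closed walk rather than a collection of null-homologous pieces; if all $\gamma_i$ reduced to trivial walks then $x = 0$, and if they reduced to doubled odd cycles (as flagged in the Remark) one must check that such a $c(\gamma)$ is torsion — doubling an odd cycle gives a class of order dividing $2$ — so an infinite-order class cannot be a sum of only those. A secondary technical point is verifying the homologies in step two at the level of the explicit chain formula for $c(\gamma)$, i.e.\ exhibiting the correct $2$-chains in $\hom(K_2,G)$ whose boundary is $c(\gamma) - c(\gamma')$; this is a direct but somewhat fiddly computation using that products of simplices on complete bipartite subgraphs are faces of $\hom(K_2,G)$.
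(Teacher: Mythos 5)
Your overall plan---reduce to a single even walk, then read off a $2$-stable family and apply the warmth characterization---is the right starting point, and the preliminary lemma you invoke is exactly the one the paper uses. But the middle of your argument has a gap that the paper's proof is specifically designed to avoid: you aim for a $2$-stable family of \emph{singletons} $\{\{w_i\}\}$, which requires the extremal walk to satisfy $N(w_{i-1}) \cap N(w_{i+1}) = \{w_i\}$ for all $i$. Your mechanism for enforcing this is the claim in step two that if $w_{i-1},w_{i+1}$ have another common neighbor $u \neq w_i$, then $\gamma$ is homologous to a \emph{strictly shorter} even closed walk. That claim fails when $u$ is not already on the walk: replacing $w_{i-1}\dash w_i \dash w_{i+1}$ by $w_{i-1}\dash u \dash w_{i+1}$ produces a walk of the \emph{same} length (homologous via the square $\{w_{i-1},w_{i+1}\}\times\{w_i,u\}$), and the new walk has exactly the same obstruction with $w_i$ playing the role of the stray common neighbor. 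So length-minimality alone simply does not yield the unique-common-neighbor property, and the reduction you want is not well-founded in the needed sense.

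The paper circumvents this in two ways that are absent from your proposal. First, it does not minimize length of a representative of a fixed class; it minimizes the \emph{ratio} $\ell(\gamma)/r_\gamma$ over all even cycles $\gamma$ with $c(\gamma)=r_\gamma x$, $r_\gamma\in\Z_{>0}$, allowing multiply-wound cycles to compete. This rational systole-type quantity is what makes the contradiction arguments (splitting one cycle into two whose ratios cannot both be $\geq$ the minimum) work; in particular it is precisely where infinite order of $x$ is used, since it guarantees the quantity $r_\gamma$ is well-defined and the set of competitors is nontrivial. Second, and more fundamentally, the paper does not try to force singleton intersections. It builds sets $A_i, B_i$ that \emph{accumulate} all vertices that appear in position $i$ on \emph{any} cycle achieving the minimal ratio, and then proves $N(A_i)\cap N(A_{i+1}) = B_i$ directly. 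Stray common neighbors like your $u$ are not eliminated---they are absorbed into $B_i$ precisely when they lie on an extremal cycle, and shown not to exist otherwise. So the gap in your proposal is not a technical detail you can patch by being more careful with the chain computation; it calls for replacing the singleton family by the paper's set-valued construction, together with the $\ell/r$ minimization.
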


\begin{proof}
Suppose $x\in H_1(\hom(K_2,G))$ generates a subgroup isomorphic to $\Z$ and assume, replacing $x$ by one of its summands if necessary, that $x=c(\gamma)$ for some even cycle $\gamma$. We need to show that there exists a $2$-stable family in $G$. We will construct such a family $\{A_i, B_i\}_{i\in \mathbb{Z}_n}$, for some $n$ defined below, as follows. First consider the set of all even cycles $\gamma$ in $G$ such that $c(\gamma)=r_{\gamma}x$ for some $r_{\gamma}\in\Z_{>0}$. Then, choose from this set a cycle that minimizes $\ell(\gamma)/r_{\gamma}$.
Let $n=\ell(\gamma)/2$, and write 
\[\gamma =\cdots\dash a_{i-1}\dash b_i\dash a_i \dash b_{i+1}\dash \cdots,\]
where indices are taken modulo $n$. Note that $c(\gamma)$ has infinite order in $H_1(\hom(K_2,G))$, so we may assume (possibly redefining $x$) that $x=c(\gamma)$ and $r_\gamma=1$.  We begin our construction of the sets $A_i$ and $B_i$ by declaring  $a_i\in A_i$ and $b_i\in B_i$ for each $i=0,1, \dots ,n-1$. Now recursively, if there exists a cycle 
\[\gamma' =\cdots\dash c_{i-1}\dash d_i\dash c_i \dash d_{i+1}\dash \cdots\] of length $2r n$ in $G$, such that $c_k=a_k$ or $d_k=b_k$ for some $k$ and with $r c(\gamma)=c(\gamma')$, then we add $c_{i+2sn}\in A_i$ and $d_{i+2sn}\in B_i$, for every $s\in \Z$. We will say that the cycle $\gamma'$ \emph{forces} $c_{i+2sn}\in A_i$ and $d_{i+2sn}\in B_i$.

For the completion of the proof we need to show that $A_i$ and $B_i$ are proper subsets of $V(G)$ and that $N(A_i)\cap N(A_{i+1})= B_{i+1}$. By the same argument we will then get $N(B_i)\cap N(B_{i-1})= A_{i-1}$. 

% Again this was originally $N(B_i)\cap N(B_{i-1})= A_i$.

To show that $A_i$ and $B_i$ are proper subsets of $V(G)$, by symmetry (rotations of the cycle $\gamma$) it suffices to show that $A_0\neq V(G)$. We will therefore show by contradiction that $a_1\not\in A_0$. If we had $a_1\in A_0$, there would be two paths $\pi$ from $a_0$ to $a_1$ and $\pi'$ from $a_1$ to $a_0$ in $G$, with $\ell(\pi)=2rn$, $\ell(\pi')=2r'n$, and with $c(\pi\cdot\pi')$ a representative of $(r+r')x$ in $H_1(\hom(K_2,G))$. Then we have two cycles \[\tilde\pi \cq \pi \cdot (a_1\dash b_2\dash\cdots\dash a_0)\] and \[\tilde\pi'\cq (a_0\dash b_1\dash a_1)\cdot\pi,\] with $\ell(\tilde\pi) + \ell(\tilde\pi')=2(r+r'+1)n$ and  $c(\tilde\pi) + c(\tilde\pi')=2(r+r'+1)x$. Since none of the cycles $\tilde\pi$ or $\tilde\pi'$ have lengths that are multiples of $2n$, one of them has a smaller fraction $\ell/r$ than that of $\gamma$, which contradicts the minimality according to which $\gamma$ was chosen.

We need to prove that $N(A_i)\cap N(A_{i+1})= B_{i+1}$. The inclusion $B_{i+1}\subseteq N(A_i)\cap N(A_{i+1})$ follows directly from the construction. Assume $v\in N(A_i)\cap N(A_{i+1})$, meaning that there is a path $c_i\dash v\dash c'_{i+1}$ with $c_i\in A_i$ and $c'_{i+1}\in A_{i+1}$. Let 
\[a_k\dash d_{k+1}\dash\cdots\dash d_i\dash c_i \dash d_{i+1}\dash \cdots\dash d_k\dash a_k\]
 be a cycle forcing $c_i\in A_i$, with \[\ell(a_k\dash\cdots\dash c_i)=2rn + 2(i-k)\] and \[\ell(c_i \dash d_{i+1}\dash \cdots\dash d_k\dash a_k)=2sn-2(i-k).\]
Analogously, let 
\[a_{k'}\dash d_{k'+1}\dash\cdots d'_{i+1}\dash c'_{i+1} \dash d'_{i+2}\dash \cdots\dash d_{k'}\dash a_{k'}\]
 be a cycle forcing $c'_{i+1}\in A_{i+1}$, with \[\ell(c'_{i+1}\dash\cdots\dash a_{k'})=2r'n + 2(k'-i-1)\] and \[\ell(a_{k'}\dash\cdots\dash c'_{i+1})=2s'n - 2(k'-i-1).\] Then these cycles, together with $\gamma$, can be concatenated to form cycles
\[a_k\dash d_{k+1}\dash\cdots\dash c_i\dash v\dash c'_{i+1} \dash \cdots\dash d'_{k'}\dash a_{k'}\dash\cdots\dash a_k\] 
and
\[a_k'\dash d'_{k+1'}\dash\cdots\dash c_i'\dash v\dash c_{i+1} \dash \cdots\dash d_{k}\dash a_{k}\dash\cdots\dash a_k'.\]
The sum of these cycles generate $(r+r'+s+s')x$ in $H_1(\hom(K_2,G))$, and the sum of their lengths is $2(r+r'+s+s')n$. Hence they both have $\ell/r=\ell_\gamma/r_\gamma$, as this is the minimal possible value of $\ell/r$ by construction. Thus we have $v\in B_{i+1}$, which finishes the proof.
\end{proof}

It is likely that the criterion that $x$ has infinite order is superfluous, so that we have $\zeta(G)\leq 3$ whenever $H_1(\hom(K_2,G))\neq 0$. The proof of Theorem \ref{thm:fundgroup} relies on a homology generator minimizing the quotient $\ell/r$. It is clear that such a generator must exist for finite graphs, but there is no reason to expect that Theorem \ref{thm:fundgroup} would fail for infinite graphs.

\section{Warmth of twisted products} \label{sec:Twisted}

In \cite{DocSch}, a family of \emph{twisted toroidal graphs} $\{T_{k,m}: k,m\geq 1\}$ is constructed with the property that the connectivity of their neighborhood complex is an arbitrarily bad lower bound for the index of the same space, and hence also for the chromatic number.  These graphs are natural candidates for a counterexample to Conjecture \ref{mainconj} since the topological properties of $\hom(K_2, T_{k,m})$ that provide a lower bound on $\chi(G)$ are not detected by connectivity (they are examples of so-called \emph{non-tidy} spaces, see \cite{DocSch} for details).

In this section we show that in fact these graphs also have small warmth, thus presenting a new family of graphs for which the warmth bound is arbitrarily far from the chromatic number, and also further supporting Conjecture~\ref{mainconj}.  We will first settle the definitions, which agree with those in \cite{DocSch}.

\begin{definition}
Let $\Gamma$ be a group, and suppose $G$ and $H$ are graphs with left $\Gamma$-actions.  The \emph{twisted product} $G \times_{\Gamma} H$ is the graph with vertices and edges given by
\[V(G\times_{\Gamma} H)\cq V(G)\times V(H)/\sim,\] where $(\gamma g, h)\sim (g,\gamma h)$ for every $g\in V(G), h\in V(H), \gamma\in\Gamma$, and
\[E(G\times_{\Gamma} H)\cq \{\{(g,h) , (g',h')\} : gg'\in E(G), hh'\in E(H)\}\]
 \end{definition}

In particular, if $\Gamma$ acts trivially on both $G$ and $H$, then the twisted product is just the \emph{categorical} or \emph{direct} product. It is easy to check that twisted products are associative, so that \[(A\times_\Gamma B)\times_\Gamma C\cong A\times_\Gamma(B \times_\Gamma C).\]  We then have the following definition.

\begin{definition}
Let $C_{2m}^\circ $ be the cycle graph of even length $2m$ with a loop attached at every vertex, endowed with the antipodal $\Z_2$-action. Let $K_2$ have the $\Z_2$-action interchanging its two vertices. Define the \emph{twisted toroidal graph} $T_{k,m}$ recursively by $T_{0,m}=K_2$ for all $m\geq 1$, and $T_{k+1, m}\cq T_{k,m}\times_{\Z_2}C_{2m}^\circ$.
\end{definition}

A concrete representation of $T_{k,m}$ is thus as follows. Its vertices are equivalence classes of $(k+1)$-tuples $(\epsilon, a_1,\dots , a_k)$, where $\epsilon\in\{\pm\}$ and $a_i\in\Z$ (or $\Z/(2m)$), modulo the identifications \[(\epsilon, a_1,\dots, a_k)=(-\epsilon, a_1,\dots , a_{i-1}, a_i + m, a_{i+1},\dots ,a_k).\] The neighborhoods are given by \[N([\epsilon, a_1,\dots , a_k])=\{[- \epsilon, a_1+\alpha_1,\dots , a_k + \alpha_k] : \alpha_i\in\{-1,0,1\}\}.\] In particular, $T_{k,m}$ has $2m^k$ vertices, and is $3^k$-regular.  See Figure \ref{Fig:toroidal} for a depiction of $T_{1,3}$.

\medskip

\begin{figure}[ht]
\begin{center}
  \includegraphics[scale = 0.3]{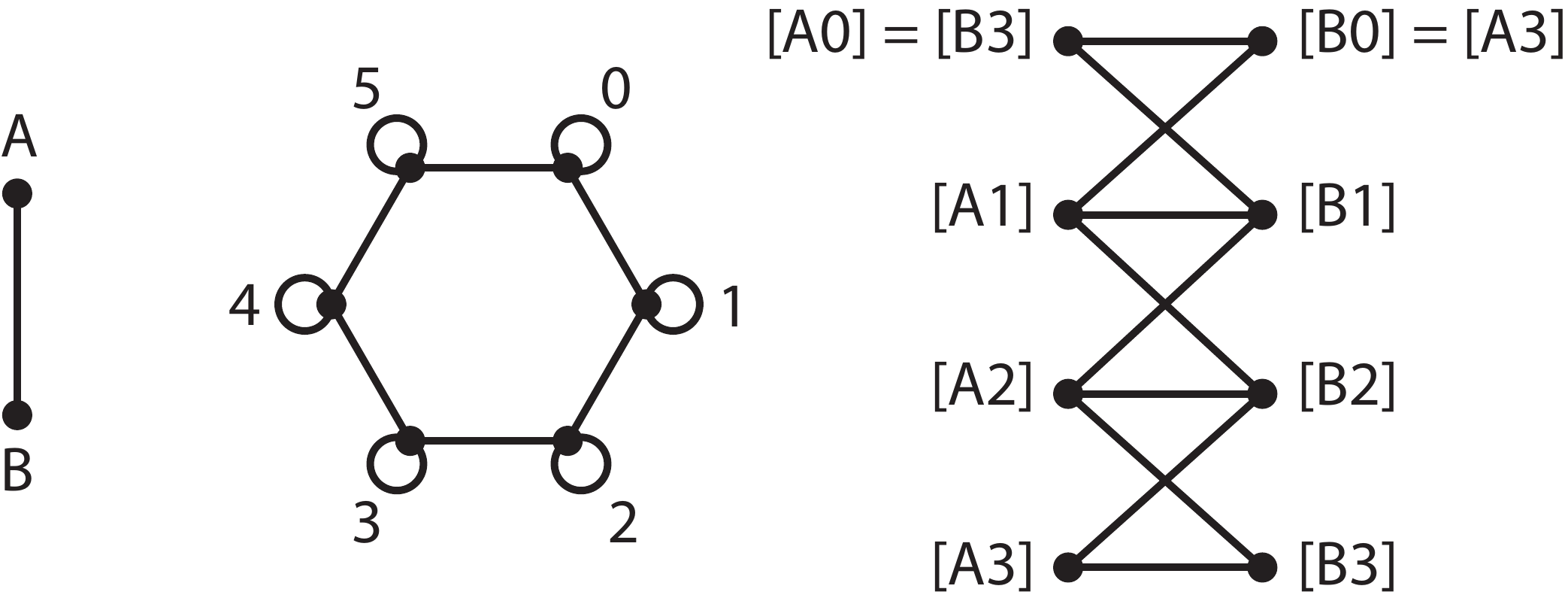}

    \caption{The graphs $K_2$, $C_6^\circ$, and $T_{1,3} = K_2 \times_{\Z_2} C_6^\circ$.}
\label{Fig:toroidal}
\end{center}
\end{figure}

We are now ready to prove that $\zeta(T_{k,m})\leq 3$ for large enough $m$.  We will establish this as a consequence of a more general result regarding twisted products.

\begin{lemma}\label{product}
Let $\Gamma$ be a group that acts on a pair of graphs $G$ and $H$. Let $\zeta(G)\leq d+1$, and suppose $\mathcal A$ is a $d$-stable family in $G$ that is invariant under the $\Gamma$-action. Then $\zeta(G\times_\Gamma H)\leq d+1$.
\end{lemma}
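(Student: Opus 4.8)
The plan is to lift the given family $\mathcal A$ along the quotient map $q\colon V(G)\times V(H)\to V(G\times_\Gamma H)$. First I would record the (routine) observation that the relation $\sim$ is exactly the orbit equivalence for the $\Gamma$-action $\gamma\cdot(g,h)=(\gamma g,\gamma^{-1}h)$ on $V(G)\times V(H)$, and that this action is by automorphisms of the categorical product $G\times H$. For a $\Gamma$-invariant set $A\subseteq V(G)$ the product $A\times V(H)$ is then a union of $\sim$-classes, so writing $\bar A\cq q(A\times V(H))$ we have $q^{-1}(\bar A)=A\times V(H)$; in particular $\bar A$ is a proper nonempty subset of $V(G\times_\Gamma H)$ whenever $A$ is. Since each member of $\mathcal A$ is $\Gamma$-invariant, $\overline{\mathcal A}\cq\{\bar A:A\in\mathcal A\}$ is a family of proper subsets of $V(G\times_\Gamma H)$, and I claim it is $d$-stable.

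The key step is the identity $N_{G\times_\Gamma H}(\bar A)=\overline{N_G(A)}$ for every $\Gamma$-invariant $A\subseteq V(G)$. To prove it I would unwind adjacency in the twisted product: a class $[g',h']$ is adjacent to $\bar A$ iff it has a representative $(g_0,h_0)$ for which there exist $g_1\in A$ and $h_1\in V(H)$ with $g_0g_1\in E(G)$ and $h_0h_1\in E(H)$. Since $h_1$ is otherwise unconstrained, such a pair exists exactly when $g_0\in N_G(A)$ and $h_0$ is not isolated in $H$; assuming $H$ has no isolated vertex (as in the cases of interest, e.g. $H=C_{2m}^\circ$) this just says $g_0\in N_G(A)$, and by $\Gamma$-invariance of $N_G(A)$ this holds for one representative iff for all, i.e. iff $[g',h']\in\overline{N_G(A)}$. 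The same saturation bookkeeping shows that $q$ commutes with finite intersections of $\Gamma$-invariant sets, so $\bigcap_i\overline{B_i}=\overline{\bigcap_i B_i}$.

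Granting these two facts the lemma is immediate: given $A\in\mathcal A$, pick $A_1,\dots,A_d\in\mathcal A$ with $\bigcap_{i=1}^dN_G(A_i)=A$; then
\[\bigcap_{i=1}^d N_{G\times_\Gamma H}(\bar A_i)=\bigcap_{i=1}^d\overline{N_G(A_i)}=\overline{\textstyle\bigcap_{i=1}^dN_G(A_i)}=\overline A=\bar A,\]
so $\overline{\mathcal A}$ is a $d$-stable family in $G\times_\Gamma H$ and hence $\zeta(G\times_\Gamma H)\le d+1$ by the characterization of warmth.

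I expect the only delicate point to be the verification of $N_{G\times_\Gamma H}(\bar A)=\overline{N_G(A)}$, i.e. that neighborhoods may be computed upstairs in $G\times H$ and then pushed down through $q$: this is precisely where the $\Gamma$-invariance of the members of $\mathcal A$ is used — it makes $A\times V(H)$ a saturated set — and it is also the reason one wants $H$ to have no isolated vertices. The remaining ingredients (properness of the $\bar A$, the interaction of $q$ with finite intersections, and the product formula $N_{G\times H}(A\times V(H))=N_G(A)\times V(H)$) are entirely routine.
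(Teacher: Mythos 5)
Your construction of the family $\overline{\mathcal A}$ is the same as the paper's family $\mathcal B=\{\{[g,h]:g\in A\}:A\in\mathcal A\}$ (for $\Gamma$-invariant $A$, $q(A\times V(H))$ is exactly $\{[g,h]:g\in A\}$), so this is essentially the same approach; what you do differently is to factor the $d$-stability check through the clean identity $N_{G\times_\Gamma H}(\bar A)=\overline{N_G(A)}$ together with the fact that $q$ respects finite intersections of saturated sets, whereas the paper verifies $\bigcap_i N(B_{A_i})\subseteq B_A$ in one go by passing to well-chosen representatives. Your packaging is somewhat tidier and has the merit of surfacing a genuine subtlety that the paper glosses over: the paper's proof only establishes the inclusion $\bigcap_i N(B_{A_i})\subseteq B_A$ and never addresses the reverse inclusion, and as you correctly observe the reverse inclusion requires $H$ to have no isolated vertices (equivalently, $N_{G\times_\Gamma H}(\bar A)\supseteq\overline{N_G(A)}$ can fail if some $h\in V(H)$ has no neighbor, since then $[g,h]$ is isolated in $G\times_\Gamma H$). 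This hypothesis on $H$ does not appear in the lemma as stated, so strictly speaking both your proof and the paper's prove a slightly weaker statement than what is claimed; the missing case ($H$ with an isolated vertex) is easily dispatched separately, because then $G\times_\Gamma H$ is disconnected and so has warmth $2\leq d+1$ automatically, but it would be worth saying so. In the application in the paper $H=C_{2m}^\circ$ has no isolated vertices, so nothing downstream is affected.
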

\begin{proof}
We construct the desired collection $\mathcal B$ of subsets of $V(G\times_\Gamma H)$ by defining
\[\mathcal{B}\cq\{\{[g,h]: g\in A\} : A\in \mathcal{A}\}.\] 
We claim that $\mathcal{B}$ is a $d$-stable family. Fix an element $B_A=\{[g,h]: g\in A\}$ of $\mathcal B$. Let $\{A_i\}_{i=1}^d\subseteq\mathcal{A}$ be such that $\cap_i N(A_i)= A$; such a subfamily exists since $\mathcal A$ is $d$-stable. We claim that $\cap_i N(B_{A_i})= B_A$. Indeed, assume $[g,h]\in \cap_i N(B_{A_i})$ is adjacent to some $[g_i, h_i]\in B_{A_i}$ for every $i=1,\dots, d$. By construction, we can choose representatives $(g_i,h_i)$ for the equivalence classes $[g_i, h_i]$ such that $g g_i$ is an edge in $G$ and $hh_i$ is an edge in $H$ for every $i$. Moreover, we have $g_i\in A_i$ by construction (since $\mathcal A$ is $\Gamma$-invariant). Now $g\in \cap_i N(A_i)= A$, so $[g,h]\in B_A$. This proves the lemma.
\end{proof}

Just as quotient maps modulo group actions are local homeomorphisms only if the group action is properly discontinuous, we need a notion of discontinuity to guarantee that combinatorial properties of graphs remain valid modulo the group action. This motivates the following definition, where for vertices $v, w \in G$ we let $d(v,w)$ denote the graph distance (length of the shortest path between $v$ and $w$).

\begin{definition}
Let $\Gamma$ be a group acting on a graph $G$. If for every $v\in V(G)$ and for every non-identity element $\gamma \in \Gamma$, we have $d(v,\gamma v)\geq D$, then $\Gamma$ is said to act $D$-discontinuously.
\end{definition}

\begin{lemma}\label{stable}
Suppose that $G$ has a $d$-stable family consisting of singletons, and that $\Gamma$ acts $5$-discontinuously on $G$. Then  $G$ has a $d$-stable family  that is invariant under the $\Gamma$-action.
\end{lemma}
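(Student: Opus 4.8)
The plan is to take the given $d$-stable family of singletons $\mathcal{A} = \{\{v_1\}, \dots, \{v_k\}\}$ in $G$ and "spread it out" over the $\Gamma$-orbits to produce a $\Gamma$-invariant family. The natural candidate is $\mathcal{A}' := \{\{g v\} : \{v\} \in \mathcal{A},\ g \in \Gamma\}$, the $\Gamma$-closure of $\mathcal{A}$, which is manifestly $\Gamma$-invariant and still consists of singletons (hence of proper subsets, as long as $|V(G)| \geq 2$, which holds since $G$ has an edge). So the only thing to check is that $\mathcal{A}'$ is still $d$-stable: for each singleton $\{gv\}$ I must exhibit $d$ sets $\{g_1 w_1\}, \dots, \{g_d w_d\}$ in $\mathcal{A}'$ with $\bigcap_j N(\{g_j w_j\}) = \{gv\}$.

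The obvious move is equivariance of the witnessing family: since $\{v\}$ has a $d$-stable witness $\{w_1\}, \dots, \{w_d\} \in \mathcal{A}$ with $\bigcap_j N(w_j) = \{v\}$, and since $N(gw) = g N(w)$ for any graph automorphism $g$, applying $g$ gives $\bigcap_j N(g w_j) = g\bigl(\bigcap_j N(w_j)\bigr) = \{gv\}$. First I would record this: for every $\{gv\} \in \mathcal{A}'$, the family $\{\{g w_1\}, \dots, \{g w_d\}\} \subseteq \mathcal{A}'$ satisfies $\bigcap_j N(g w_j) = \{gv\}$. That looks like it finishes the proof — but it is exactly here that the $5$-discontinuity hypothesis must be doing work, so I should be suspicious that I have overlooked something.

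The gap, and the main obstacle, is the well-definedness of the construction together with the possibility that $\mathcal{A}'$ accidentally identifies or over-generates sets. More precisely, two issues can arise: (a) some $\{g v_i\}$ might coincide with some $\{h v_j\}$ for $(g,i) \neq (h,j)$, and (b) even if $\bigcap_j N(g w_j) = \{gv\}$ holds formally, if two of the translates $g w_j$ happen to coincide then the "subfamily of size $d$" degenerates. This is where $5$-discontinuity enters. The neighborhoods $N(w_j)$ all lie within graph distance $\le 2$ of $v$ (since $v \in N(w_j)$ forces $d(v,w_j) = 1$, hence $N(w_j) \subseteq B_2(v)$, and $\{v\} = \bigcap N(w_j)$ sits inside each), so the whole configuration $\{v, w_1, \dots, w_d\}$ has diameter $\le 2$; thus $\{v\}$ together with its witnessing vertices is contained in a ball of radius $2$. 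If $\Gamma$ acts $5$-discontinuously, then for a non-identity $g$ the translate $g\bigl(B_2(v)\bigr)$ is disjoint from $B_2(v)$ (since $d(v, gv) \ge 5 > 4$), so distinct $\Gamma$-translates of the local configuration cannot interfere — in particular the $g w_j$ are genuinely distinct, and when we intersect neighborhoods coming from a single translate $g$ we never accidentally pick up contributions from another translate. So the plan is: (1) define $\mathcal{A}' = \{\{gv\} : \{v\}\in\mathcal{A}, g\in\Gamma\}$; (2) observe it is $\Gamma$-invariant and consists of singletons; (3) for the stability, fix $\{gv\}$, take the equivariant witness $\{gw_1\},\dots,\{gw_d\}$, and use the fact that $v, w_1,\dots,w_d$ lie in $B_2(v)$ to conclude, via $5$-discontinuity, that $\bigcap_j N(g w_j)$ cannot be enlarged by stray vertices from other orbit translates and that the $gw_j$ are distinct, so the intersection is exactly $\{gv\}$.

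One subtlety worth flagging: I should double-check the exact diameter bound needed — the witnessing neighbors $w_j$ are at distance $1$ from $v$, their neighborhoods $N(w_j)$ reach distance $2$ from $v$, and a potential "intruder" vertex $u \in g' N(w_j')$ with $g' \neq g$ lies at distance $\le 2$ from $g'v$, so $d(u, v) \ge d(gv, g'v) - 2 - 1 \ge 5 - 3 = 2$ after relabeling, and one needs strict separation to rule out $u$ being the vertex forced into the intersection; tracking the constant carefully (it is $5$, not $4$, presumably because one of the $w_j$ themselves could otherwise collide across translates) is the one genuinely fiddly point, and I would set it up by always working inside the ball $B_2(gv)$ around the orbit representative and invoking $d(gv, g'v) \ge 5$ to separate these balls.
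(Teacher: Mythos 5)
Your construction produces a family that is closed under the $\Gamma$-action, but that is not the same as what Lemma~\ref{product} needs and what the paper means by a ``$\Gamma$-invariant'' family. Re-read the proof of Lemma~\ref{product}: after choosing a representative $(g_i,h_i)$ of the class $[g_i,h_i]$ so that $gg_i$ and $hh_i$ are edges, the proof asserts ``$g_i\in A_i$ by construction (since $\mathcal{A}$ is $\Gamma$-invariant).'' That step requires each \emph{individual set} $A\in\mathcal{A}$ to satisfy $\gamma A = A$ for every $\gamma\in\Gamma$; it does not suffice that $\mathcal{A}$ is setwise stable under $\Gamma$. A singleton $\{gv\}$ is $\Gamma$-invariant in this sense only if $gv$ is a fixed point, and $5$-discontinuity precisely forbids fixed points, so your family $\mathcal{A}'=\{\{gv\}\}$ can never satisfy the requirement (unless $\Gamma$ is trivial). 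Your correct instinct that ``$5$-discontinuity isn't being used, so something is off'' was exactly right, but the thing you overlooked was the meaning of invariance, not collisions among translated singletons.

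The paper's proof instead takes $\mathcal{A}:=\{\Gamma\{x\}:x\in V(G)\}$, the family of \emph{orbits}, each of which is $\Gamma$-invariant by construction. The real work, and the real role of $5$-discontinuity, is in showing this family is still $d$-stable. The easy direction $\Gamma\{x\}\subseteq\bigcap_i N(\Gamma\{x_i\})$ follows from equivariance, but for the reverse inclusion one must rule out a vertex $y$ adjacent to $\gamma_i x_i$ for \emph{different} group elements $\gamma_i\neq\gamma_j$: such a $y$ would give a path $\gamma_i x\dash\gamma_i x_i\dash y\dash\gamma_j x_j\dash\gamma_j x$ of length $4$ between two distinct points of a $\Gamma$-orbit, contradicting $d(\gamma_i x,\gamma_j x)\geq 5$. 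Hence all $\gamma_i$ coincide, say equal to $\gamma$, and then $y\in\bigcap_i N(\gamma x_i)=\gamma\bigcap_i N(x_i)=\{\gamma x\}\subseteq\Gamma\{x\}$. Your diameter heuristic at the end gestures at this kind of separation argument, but applied to the wrong family it has nothing to do, and it never confronts the well-definedness issue in Lemma~\ref{product} that forces the move to orbits in the first place.
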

\begin{proof}
Let ${\mathcal B}$ denote the given $d$-stable family.  We will abuse notation slightly and think of ${\mathcal B}$ as a subset of $V(G)$ (as opposed to a collection of singleton sets).  Next define
\[{\mathcal A} \cq \{\Gamma(x) : x \in {\mathcal B}\},\] where $\Gamma(x) = \{\gamma x : \gamma \in \Gamma\}$ denotes the orbit of the element $x$. For $x \in {\mathcal B} $, by assumption there exist  $x_1, x_2, \dots, x_d$ in ${\mathcal B}$ such that $\cap_i N(x_i)=\{x\}$. Clearly $\gamma x\in\cap_i N(\Gamma(x_i))$ for every $\gamma\in\Gamma$. 

On the other hand, assume $y\in\cap_i N(\Gamma\{x_i\})$, so that $y$ is adjacent to $\gamma_i x_i$ for some $\gamma_i \in \Gamma$, for all $i=1,\dots, d$. We claim that all the $\gamma_i$ are in fact equal.  If not we would have a four-path \[\gamma_i x\dash \gamma_i x_i\dash y\dash\gamma_j x_j\dash \gamma_j x\] between different elements in the $\Gamma$-orbit of $x$, contradicting the fact that $\Gamma$ acts $5$-discontinuously. So then we have 
\[y\in \cap_i N(\gamma_i x_i)=\{\gamma_i x\}.\]
Hence in particular we have $y\in \Gamma(x)$. We conclude that $\mathcal A= \{\Gamma\{x\} : x\in V(G)\}$ is a $d$-stable family, and is $\Gamma$-invariant by construction.
\end{proof}

The previous two lemmas allow us to determine the warmth of $T_{k,m}$:

\begin{theorem}\label{thm:twisted}
For all $m\geq5$ and $k \geq 1$ we have $\zeta(T_{k,m}) = 3$.
\end{theorem}
\begin{proof}
For $2m > 4$ the looped cycle graph $C_{2m}^\circ$ has a $2$-stable family consisting of singletons, as $N(i-1)\cap N(i+1)=\{i\}$.
The group $\Z_2$ acts $5$-discontinuously on $C_{2m}^\circ$ when $m\geq 5$, so by Lemma~\ref{stable}, $C_{2m}^\circ$ has a $\Gamma$-invariant $2$-stable family. Thus by Lemma~\ref{product}, \[\zeta(T_{k+1,m})=\zeta(T_{k,m}\times_{\Z_2}C_{2m}^\circ)\leq 3,\] when $k\geq 0$. On the other hand, $T_{k+1,m}$ is not bipartite, so it has warmth $\zeta(T_{k+1,m})>2$. This concludes the proof.
\end{proof}

\section{Bipartite subgraphs}\label{sec:Bipartite}

In this section we investigate the effect that local structure (in terms of subgraph containment) has on the warmth of a graph $G$.  In particular, in support of Conjecture \ref{mainconj}, we derive results analogous to those known from previous work for neighborhood complexes.

Recall that the \emph{(complete) bipartite graph} $K_{A,B}$ is a graph with vertex set $V(K_{A,B}) = A \coprod B$ and edges given by all pairs $\{(v,w): v\in A, w \in B\}$.  For finite graphs with $|A| = a$, $|B| = b$, we write $K_{a,b}$ to denote the graph.  In his original paper on the neighborhood complex, Lov\'asz established the following result.

\begin{lemma}
If a graph $G$ does not contain the bipartite graph $K_{a,b}$ for some $a+b = n$, then $N(G)$ deformation retracts onto a complex of dimension $n-3$.  
In particular, if $\chi(G)$ is finite then the connectivity of $N(G)$ is no more than $n-4$.
\end{lemma}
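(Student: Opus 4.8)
The plan is to replace $N(G)$, up to homotopy, by the order complex of a poset of ``closed'' faces whose chains are short, the bound on chain length coming directly from the absence of $K_{a,b}$.

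For a nonempty $\sigma\subseteq V(G)$ write $N^*(\sigma)=\bigcap_{v\in\sigma}N(v)$ for the set of common neighbours of $\sigma$, so that $\sigma$ is a face of $N(G)$ exactly when $N^*(\sigma)\neq\emptyset$. First I would record the Galois-connection formalities: $N^*$ reverses inclusions, $\sigma\subseteq N^*(N^*(\sigma))$, and $N^*N^*N^*=N^*$, so that $c(\sigma):=N^*(N^*(\sigma))$ is a closure operator on the poset $P$ of nonempty faces of $N(G)$. Invoking the standard fact that a closure operator collapses $\Delta(P)$ onto the subcomplex $\Delta(c(P))$ --- equivalently, the nerve lemma for the cover of $N(G)$ by the simplices $\Delta_{N(w)}$, $w\in V(G)$, whose nonempty intersections are again simplices (on the sets $N^*(\sigma)$) --- one gets that $N(G)$ is homotopy equivalent to $\Delta(Q)$, where $Q$ is the poset of closed faces $\{N^*(\sigma):\sigma\in P\}$ ordered by inclusion. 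Since $\dim\Delta(Q)$ is one less than the largest number of elements of a chain in $Q$, it suffices to show that a chain $T_0\subsetneq T_1\subsetneq\cdots\subsetneq T_k$ of closed faces has $k\le n-3$.

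The heart of the argument is turning such a chain into a forbidden $K_{a,b}$. Put $U_i:=N^*(T_i)$, so $U_0\supsetneq U_1\supsetneq\cdots\supsetneq U_k$ and $N^*(U_i)=T_i$. Choose $t_0\in T_0$ and $t_i\in T_i\setminus T_{i-1}$ for $1\le i\le k$, and choose any $u_{k+1}\in U_k$; for $1\le i\le k$, since $t_i$ is adjacent to all of $U_i$ but, lying outside $T_{i-1}=N^*(U_{i-1})$, is non-adjacent to some vertex of $U_{i-1}$, that vertex must lie in $U_{i-1}\setminus U_i$ and is taken to be $u_i$. One then checks that for $j>i$ one has $u_j\in U_i$ (hence $t_i\sim u_j$) and $t_i\neq u_j$ (otherwise $t_i\in U_i$ would be adjacent to itself). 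If now $k\ge n-2$, then the $a$ vertices $t_0,\dots,t_{a-1}$ and the $b$ vertices $u_{k-b+2},\dots,u_{k+1}$ are pairwise distinct --- every index on the right is at least $k-b+2\ge a$ and so exceeds every index on the left --- and each vertex on the left is adjacent to each vertex on the right, exhibiting $K_{a,b}\subseteq G$, a contradiction. Hence $k\le n-3$, every chain of $Q$ has at most $n-2$ elements, and $\dim\Delta(Q)\le n-3$; this proves the first assertion. If moreover $\chi(G)<\infty$, then by Theorem~\ref{thm:topbound} the connectivity of $N(G)\simeq\hom(K_2,G)$ is finite, so $N(G)$ is not contractible, and a non-contractible complex of dimension at most $n-3$ is at most $(n-4)$-connected.

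The step I expect to be most delicate is the passage $N(G)\simeq\Delta(Q)$ in a form matching the literal statement: the available off-the-shelf results (the nerve lemma; ``closure operators are homotopy equivalences'') give a homotopy equivalence --- realized, if one wishes, by a collapse of the barycentric subdivision onto $\Delta(Q)$ --- which is all the connectivity conclusion needs but is formally slightly weaker than a deformation retraction of $N(G)$ itself onto an $(n-3)$-dimensional subcomplex. The combinatorial core is elementary, but the bookkeeping is unforgiving: it is exactly the two ``free'' end choices $t_0\in T_0$ and $u_{k+1}\in U_k$ that sharpen the bound from $k\le n-2$ to the required $k\le n-3$, and disjointness of the two colour classes of the $K_{a,b}$ produced uses that $G$ is loopless.
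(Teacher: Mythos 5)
The paper does not prove this lemma; it is stated as a citation to Lov\'asz's original paper \cite{Lov}, so there is no ``paper's own proof'' to compare against. Your reconstruction is correct and is, in substance, Lov\'asz's argument: the closure operator $c = N^*N^*$ on the face poset $P$ of $N(G)$ collapses $\Delta(P)$ (the barycentric subdivision of $N(G)$) onto $\Delta(Q)$ for $Q = c(P)$, and a chain $T_0\subsetneq\cdots\subsetneq T_k$ of closed faces, dualized via $N^*$ to $U_0\supsetneq\cdots\supsetneq U_k$, produces a $K_{a,b}$ once $k\geq a+b-2$. Your bookkeeping of the two ``free'' end choices $t_0\in T_0$ and $u_{k+1}\in U_k$ is exactly what sharpens $k\leq n-2$ to the needed $k\leq n-3$, your verification that the $t_i$ and $u_j$ are pairwise distinct (using looplessness for $t_i\neq u_j$) is right, and your closing caveat distinguishing a deformation retraction of $N(G)$ itself from a collapse of its barycentric subdivision is apt; the connectivity conclusion follows as you state.

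One small imprecision worth flagging: the parenthetical claim that the closure-operator collapse is ``equivalently, the nerve lemma'' for the cover $\{\Delta_{N(w)} : w\in V(G)\}$ is circular as stated. The nerve of that cover has a face on $\sigma$ exactly when $\bigcap_{w\in\sigma}\Delta_{N(w)} = \Delta_{N^*(\sigma)}$ is nonempty, i.e.\ exactly when $\sigma$ is a face of $N(G)$; so the nerve \emph{is} $N(G)$, and the nerve lemma returns only the tautology $N(G)\simeq N(G)$. It is the closure-operator collapse (equivalently, Quillen's fiber lemma applied to the inclusion $c(P)\hookrightarrow P$) that carries the homotopy-theoretic step, and your argument in fact relies only on that, so the proof stands.
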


We can ask about the influence of bipartite subgraphs on the warmth of a graph.  Recall that $N(G)$ is homotopy equivalent to $\hom(K_2,G)$, so if Conjecture \ref{mainconj} is true, it then follows that if $G$ does not contain the complete bipartite graph $K_{a,b}$ then necessarily $\zeta(G) \leq a + b - 1$.  Our next main result says that this is indeed the case, providing further evidence for Conjecture \ref{mainconj}.  This generalizes the result that a graph $G$ with girth $g\geq 5$, always has $\zeta(G)\leq 3$. Indeed, graphs with girth $g\geq 5$ do not contain any copy of $C_4\cong K_{2,2}$ as a subgraph.

We will need the following notion.

\begin{definition}
Suppose $G$ is a graph with vertices $v$ and $w$ such that $N(v) \subseteq N(w)$.  Then the graph homomorphism that sends $v$ to $w$ and every other vertex of $G$ to itself is a retraction of $G$ onto the graph $G \backslash \{v\}$. We call this map a \emph{fold} (or a \emph{folding}) and denote it $f_{vw}$.  A graph $G$ is said to be \emph{stiff} if no foldings are available.
\end{definition}

\begin{figure}[ht]
\begin{center}
  \includegraphics[scale = 0.35]{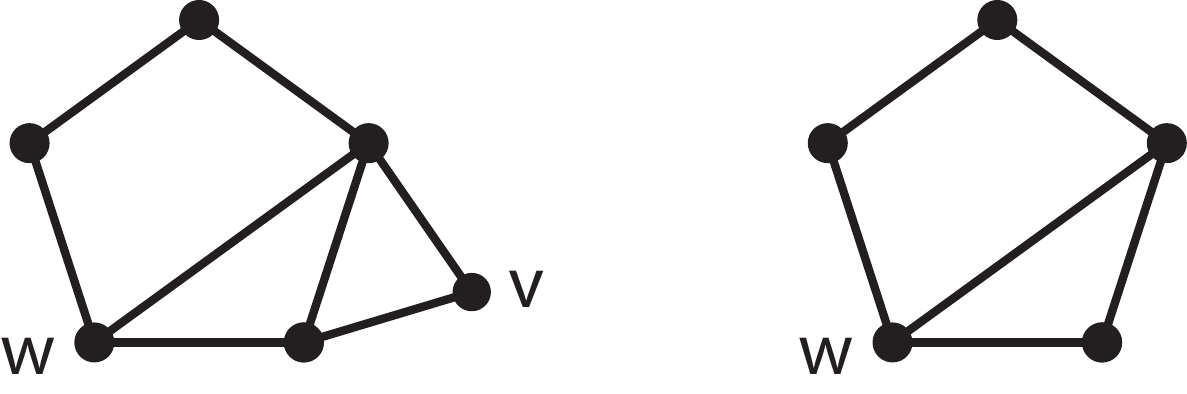}

    \caption{A graph $G$ and the folded $G \backslash \{v\}$.}
\label{Fig:fold}
\end{center}
\end{figure}

See Figure \ref{Fig:fold} for an illustration of a folding.  We then have the following result from \cite{BriWin}. 

\begin{theorem}
If $f_{vw}: G \rightarrow G \backslash \{v\}$ is a folding then we have
\[\zeta(G) = \zeta(G \backslash \{v\}).\]
\end{theorem}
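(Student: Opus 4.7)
The plan is to use the $d$-stable family characterization of warmth recalled earlier: $\zeta(G) \leq d+1$ if and only if $G$ admits a $d$-stable family. It therefore suffices to show that $G$ has a $d$-stable family if and only if $G \setminus \{v\}$ does, for every $d \geq 1$. Throughout, write $f \cq f_{vw}$ for the fold, and for $A \subseteq V(G)$ set $f(A) \cq (A \setminus \{v\}) \cup \{w\}$ if $v \in A$, and $f(A) \cq A$ otherwise. The combinatorial input driving everything is the hypothesis $N(v) \subseteq N(w)$: any neighbor of $v$ in $G$ is automatically a neighbor of $w$, so collapsing $v$ to $w$ never shrinks the set of common neighbors available to other vertices.

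For the inequality $\zeta(G) \leq \zeta(G \setminus \{v\})$, I would start with a $d$-stable family $\mathcal A$ in $G \setminus \{v\}$ and lift it to $G$ by defining $\tilde A \cq A \cup \{v\}$ whenever $w \in A$ and $\tilde A \cq A$ otherwise, and setting $\tilde{\mathcal A} \cq \{\tilde A : A \in \mathcal A\}$. Using the lifts of the stability witnesses $A_1,\dots,A_d$ of each $A$, the identity to verify is $\bigcap_i N_G(\tilde A_i) = \tilde A$. Membership of any vertex $u \neq v$ on either side reduces to the original stability of $\mathcal A$ in $G \setminus \{v\}$; the more delicate question of whether $v$ lies in the intersection is controlled by $N(v) \subseteq N(w)$, which forces $v \in \bigcap_i N_G(\tilde A_i)$ exactly when $w \in \bigcap_i N_{G \setminus \{v\}}(A_i) = A$, which in turn is the defining trigger for $v \in \tilde A$.

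For the reverse inequality $\zeta(G \setminus \{v\}) \leq \zeta(G)$, I would start with a $d$-stable family $\mathcal B$ in $G$ and push it forward by $f$, taking $\mathcal B' \cq \{f(B) : B \in \mathcal B\}$ with witnesses $f(B_1),\dots,f(B_d)$ inherited from those of $B$. The identity $\bigcap_i N_{G \setminus \{v\}}(f(B_i)) = f(B)$ again reduces, via $N(v) \subseteq N(w)$, to the original identity $\bigcap_i N_G(B_i) = B$: any edge in $G$ incident to $v$ may be replaced by an edge to $w$ without disturbing the intersection once $v$ is removed from the ambient graph.

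The main obstacle is the careful bookkeeping around the vertex $v$, and in particular the need to ensure on the backward direction that the folded sets $f(B)$ remain proper subsets of $V(G \setminus \{v\})$ --- a point which may require a short case analysis if $B \in \mathcal B$ happens to be so large that $f(B) = V(G \setminus \{v\})$. Either such sets can be removed from $\mathcal B'$ after checking that the remaining family is still $d$-stable, or one can appeal directly to the cold-map definition and transport cold maps through $f$ with the same philosophy: $N(v) \subseteq N(w)$ is precisely the ingredient that lets one substitute $w$ for $v$ throughout an argument without breaking adjacencies.
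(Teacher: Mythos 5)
The paper does not actually prove this statement---it is quoted from Brightwell--Winkler \cite{BriWin}---so your proposal can only be judged on its own terms, and as written both directions contain genuine gaps. In the lifting direction, your key claim that $v\in\bigcap_i N_G(\tilde A_i)$ \emph{exactly when} $w\in A$ is false: the hypothesis $N(v)\subseteq N(w)$ gives only the implication ``$v$ has a neighbor in each $A_i$ $\Rightarrow$ $w$ has a neighbor in each $A_i$,'' not its converse. Concretely, let $G\setminus\{v\}$ be the $5$-cycle $0\dash 1\dash 2\dash 3\dash 4\dash 0$ and let $v$ be adjacent only to $1$, so that $f_{v0}$ is a fold. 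The singletons form a $2$-stable family downstairs, and your lift replaces $\{0\}$ by $\tilde A=\{0,v\}$; but its witnesses lift to $\{4\}$ and $\{1\}$, and $N_G(\{4\})\cap N_G(\{1\})=\{0,3\}\cap\{0,2,v\}=\{0\}\neq\{0,v\}$, and no other pair of witnesses in the lifted family can produce $\{0,v\}$ either. The correct trigger for adjoining $v$ to $A$ is not ``$w\in A$'' but ``$v$ actually lies in $\bigcap_i N_G(A_i)$,'' a condition depending on the chosen witnesses; this repair does work, because $N_G(A_i\cup\{v\})=N_G(A_i)$ whenever $w\in A_i$, so adjoining $v$ to witness sets never changes their neighborhoods.

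The pushforward direction has a more serious problem. Replacing $v$ by $w$ in a witness set $B_1$ can strictly \emph{enlarge} its neighborhood, since $N(w)$ may properly contain $N(v)$; consequently $\bigcap_i N_{G\setminus\{v\}}(f(B_i))$ can acquire a vertex $u\in N(w)\setminus N(B_1)$ that lies in every other $N(B_j)$, and then the intersection strictly contains $f(B)$. Nothing in the hypothesis $N(v)\subseteq N(w)$ rules this out, so the identity does not ``reduce to the original identity'' as you assert; only the containment $\bigcap_i N_{G\setminus\{v\}}(f(B_i))\supseteq f(B)$ comes for free. Your closing remark correctly identifies the escape route---transporting \emph{cold maps} through the fold and its right inverse $G\setminus\{v\}\hookrightarrow G$ rather than manipulating stable families---and that is essentially how Brightwell and Winkler argue; but that argument also requires care (one must shift the level $k$ at which maps are required to agree, since $\varphi$ and $f_{vw}\circ\varphi$ differ wherever $\varphi$ hits $v$), so deferring to it does not close the gap in what you have written.
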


We note that foldings also preserve the homotopy type of the neighborhood complex, and in fact of arbitrary $\hom$ complexes, as shown in \cite{Doc} and \cite{Kozshort}.  Indeed, if $f_{vw}: G \rightarrow G \backslash \{v\}$ is a folding then there is an induced homotopy equivalence
\[(f_{vw})_* : N(G) \rightarrow N(G \backslash \{v\}).\]

Hence in our consideration of warmth $\zeta(G)$ and connectivity of $N(G)$ we may always assume that the graph $G$ is stiff. We will say that $v\in V(G)$ is \emph{generated} by $\{u_1,\dots , u_r\}$ if $\{v\}=\cup_{i=1}^r N(u_i)$.

\begin{lemma}\label{lm:bipartite}
Let $G$ be a stiff graph, and suppose $a$ and $b$ are positive integers with $a+b=n$. Assume that $v\in V(G)$ is not generated by any $n-2$ or fewer of its neighbors, i.e., that there is no collection of vertices $\{u_1,\dots ,u_{k}\}$ with $k \leq n-2$ such that $\cap_{i=1}^{k}N(u_i)=\{v\}$. 
Then there exist $v_2,\dots,v_a, w_1,\dots ,w_b$ such that $K_{(\{v,v_2,\dots v_a\},\{w_1,\dots,w_b\})}\subseteq G$.
\end{lemma}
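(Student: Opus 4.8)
The goal is to produce a complete bipartite subgraph $K_{a,b}$ inside $G$ given that $v$ cannot be generated by any $n-2 = a+b-2$ of its neighbors. The natural strategy is a greedy/inductive construction: start with $v$ on the ``$A$-side'' and build up the two sides one vertex at a time, using the ungeneratedness hypothesis at each stage to guarantee that the next vertex can be added while keeping the bipartite completeness. The key bookkeeping device is that at an intermediate stage we have a partial bipartite graph together with a set of ``common neighbors'' that we still need to shrink; stiffness of $G$ will be what prevents the construction from degenerating.

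\textbf{Key steps.}
First I would set up the induction on pairs $(a',b')$ with $a'+b' \le n$, maintaining the invariant: we have vertices $v=v_1, v_2,\dots,v_{a'}$ on one side and $w_1,\dots,w_{b'}$ on the other side forming a complete bipartite graph $K_{\{v_1,\dots,v_{a'}\},\{w_1,\dots,w_{b'}\}}$, such that the vertex $v$ is still not generated by the $a'+b'-1$ (or fewer) neighbors $v_2,\dots,v_{a'},w_1,\dots,w_{b'}$ of $v$ together with any vertex in the current common neighborhood $C = N(v_1)\cap\dots\cap N(v_{a'})\cap N(w_1)\cap\dots\cap N(w_{b'})$ wait — more carefully, the invariant should be that the intersection $\bigcap_i N(v_i)\cap \bigcap_j N(w_j)$ strictly contains $\{v\}$ together with something, i.e.\ it is not equal to $\{v\}$, in fact it still has room to grow. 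The base case $a'=1, b'=0$ is just $\{v\}$ with common neighborhood $N(v)$, which by hypothesis (since $0 \le n-2$) is not a singleton and in particular is nonempty. For the inductive step, given the partial $K_{a',b'}$, I would consider $S = \bigcap_{i} N(v_i) \cap \bigcap_j N(w_j)$; since this is an intersection of fewer than $n-1$ neighborhoods of $v$ (on the $w$-side) together with neighborhoods on the $v$-side, the hypothesis that $v$ is not generated by $\le n-2$ of its neighbors forces $S$ to contain a vertex other than the ``obvious'' ones, and I pick that vertex to add to whichever side has not yet reached its target size $a$ or $b$. Then I verify the bipartite completeness is preserved (the new vertex is adjacent to everything on the opposite side because it lies in the appropriate intersection of neighborhoods) and that the invariant for the next stage still holds.

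\textbf{Where stiffness enters, and the main obstacle.}
The delicate point — and what I expect to be the main obstacle — is ensuring that when I add a new vertex to one side, it is genuinely a \emph{new} vertex and that the new common neighborhood is still ``large enough'' to continue, rather than collapsing to $\{v\}$ prematurely. This is exactly where stiffness is needed: if adding vertices caused $N(v)$ to be contained in some $N(w_j)$ (or some containment among the chosen vertices), one could fold, contradicting stiffness; so stiffness rules out the degenerate containments that would otherwise let the construction stall before reaching sizes $a$ and $b$. Concretely, I would argue that at each of the first $n-2$ steps the set $S$ described above is not a singleton — this is a direct translation of the hypothesis — and hence there is always a fresh vertex available to append; and that when we choose which side to grow, we should grow the $w$-side until it has $b$ vertices and the $v$-side until it has $a$ vertices, taking care that the $w$-side never uses more than $b-1 < n-1$ of the neighbors of $v$ in the intermediate steps so the hypothesis still applies. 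Finally, once both sides reach their target sizes, the accumulated complete bipartite graph is precisely $K_{(\{v,v_2,\dots,v_a\},\{w_1,\dots,w_b\})}$, completing the proof. The subtle accounting — exactly how many neighbors of $v$ are ``used up'' at each stage and confirming it stays $\le n-2$ — is the part that requires the most care.
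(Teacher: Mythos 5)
Your plan runs into a genuine problem at the inductive step, and the difficulty is exactly the one you flag as ``subtle accounting'' without resolving. You propose to track the common neighborhood
\[
S \;=\; \bigcap_{i} N(v_i)\;\cap\;\bigcap_{j} N(w_j)
\]
and to argue that the hypothesis (no $n-2$ or fewer neighbors of $v$ generate $v$) forces $S$ to be larger than $\{v\}$. But the hypothesis only constrains intersections $\bigcap_{u\in U} N(u)$ where $U$ is a set of \emph{neighbors of $v$}. The vertices $v_2,\dots,v_{a'}$ sit on the same side of the bipartition as $v$, so they are not neighbors of $v$, and the sets $N(v_i)$ are not of the form the hypothesis governs. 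Thus nothing prevents $S$ from collapsing to $\{v\}$ (or $\emptyset$) long before both sides reach their targets; the step ``the hypothesis forces $S$ to contain a vertex other than the obvious ones'' is unjustified. Your appeal to stiffness (``if $N(v)\subseteq N(w_j)$ one could fold'') does not repair this, because the problematic factors are the $N(v_i)$, not containments between $v$ and the $w_j$.

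The missing idea, and the way the paper's proof handles it, is a substitution trick: the paper fixes $n$ and inducts on $a$, starting from $a=1$ (so $v$ has $\geq n-1$ neighbors, using stiffness plus the hypothesis) and passing from a $K_{a-1,b+1}$ to a $K_{a,b}$. At the inductive step one wants a new vertex on the $v$-side adjacent to $w_1,\dots,w_b$ and distinct from $v_1,\dots,v_{a-1}$. Instead of intersecting with $N(v_i)$, the paper uses stiffness to choose, for each $i\geq 2$, a vertex $u_i\in N(v_1)\setminus N(v_i)$, and then considers $\bigcap_{i=2}^{a-1} N(u_i)\cap\bigcap_{j=1}^{b} N(w_j)$. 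This is an intersection of exactly $(a-2)+b=n-2$ neighborhoods of \emph{neighbors} of $v$, so the hypothesis applies and yields a second element $v_a$; moreover $v_a\in N(u_i)$ while $v_i\notin N(u_i)$ automatically gives $v_a\neq v_i$. Your proposal lacks both pieces of this trick (making the hypothesis applicable and ensuring freshness in one stroke), and without it the construction can stall. To fix your write-up you would essentially have to reorganize the induction around this substitution, which is the heart of the argument.
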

\begin{proof}
We will fix $n$, and proceed by induction on $a$. Let $v_1$ be a vertex not generated by $n-2$ of its neighbors. By stiffness, $v_1$ is generated by its set of neighbors, so by assumption $v_1$ has to have at least $n-1$ neighbors. This proves the case $a=1$.

Assume, inductively, that $a>1$, and that we have found a complete bipartite subgraph \[K_{(\{v_1,\dots v_{a-1}\},\{w_1,\dots,w_{b+1}\})}\subseteq G.\] 
%We will find $v_{a}\in V(G)$ such that \[K_{(\{v\dots v_{a}\},\{w_1,\dots,w_{b}\})}\subseteq G.\] 
For $i=2,\dots, a-1$, by stiffness we can find $u_i\in N(v_1)\smallsetminus N(v_i)$. By assumption, $\{v_1\}$ is a proper subset of \[\bigcap_{i=2}^{a-1}N(u_i)\cap\bigcap_{j=1}^{b}N(w_i),\] so let $v_a$ be another member of this set. By construction, $v_a\not\in\{v_1,\dots ,v_{a-1}\}$. Now $\{w_1,\dots,w_{b}\}\subseteq N(v_a)$, so \[K_{(\{v\dots v_{a}\},\{w_1,\dots,w_{b}\})}\subseteq G.\]
\end{proof}

\begin{corollary}\label{corr:bipartite}
Suppose $a$ and $b$ positive integers with $a+b \geq 3$.  If a graph $G$ does not contain any subgraph isomorphic to $K_{a,b}$, then $\zeta(G)\leq a+b-1$.
\end{corollary}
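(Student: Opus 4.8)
The plan is to derive Corollary~\ref{corr:bipartite} from Lemma~\ref{lm:bipartite} together with the characterization of warmth via $d$-stable families and the fact that folding preserves warmth. The contrapositive of Lemma~\ref{lm:bipartite} says: if a stiff graph $G$ has \emph{no} copy of $K_{a,b}$ (and hence, varying over the allowed splittings, none of $K_{(\{v,v_2,\dots,v_a\},\{w_1,\dots,w_b\})}$ for this particular $a,b$), then every vertex $v\in V(G)$ \emph{is} generated by some $n-2$ or fewer of its neighbors, i.e., for each $v$ there are $u_1,\dots,u_k$ with $k\le n-2$ and $\bigcap_{i=1}^k N(u_i)=\{v\}$.

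First I would reduce to the stiff case: by the theorem of Brightwell--Winkler quoted above, $\zeta(G)=\zeta(G')$ where $G'$ is a stiff graph obtained from $G$ by a sequence of folds, and since $G'$ is a subgraph of $G$ (each fold deletes a vertex), $G'$ also contains no copy of $K_{a,b}$. So it suffices to prove the bound for stiff $G$. Next, for each vertex $v$ apply the contrapositive of Lemma~\ref{lm:bipartite} to get a witnessing family $\{u_1^{(v)},\dots,u_{k_v}^{(v)}\}$ with $k_v\le n-2=a+b-2$ and $\bigcap_i N(u_i^{(v)})=\{v\}$; pad this family (by repeating one of its members, which does not change the intersection) to have exactly $n-2$ elements. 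Then set $\mathcal{A}=\{\{v\}:v\in V(G)\}$, the family of all singletons. For each $\{v\}\in\mathcal{A}$ the padded family $\{\{u_1^{(v)}\},\dots,\{u_{n-2}^{(v)}\}\}\subseteq\mathcal{A}$ has size $n-2$ and satisfies $\bigcap_i N(\{u_i^{(v)}\})=\{v\}$, so $\mathcal{A}$ is $(n-2)$-stable. (All singletons are proper subsets of $V(G)$ provided $|V(G)|\ge 2$, which holds since $G$ has an edge.) By the Brightwell--Winkler characterization (Proposition following the definition of $d$-stable), the existence of an $(n-2)$-stable family gives $\zeta(G)\le (n-2)+1=n-1=a+b-1$.

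One small technical point I would check: Lemma~\ref{lm:bipartite} produces a $K_{a,b}$ with $v$ on the size-$a$ side, so to apply its contrapositive to \emph{every} vertex $v$ I need that $G$ contains no $K_{a,b}$ at all, which is exactly the hypothesis; the role of $v$ as a distinguished vertex in the size-$a$ part is harmless since any vertex of any $K_{a,b}$ subgraph lies on one of the two sides, and by symmetry of the statement in $a,b$ (we may assume $a\le b$, or simply note that absence of $K_{a,b}$ is symmetric) we may always take it to be on the $a$-side. I do not anticipate a serious obstacle here — the real content is in Lemma~\ref{lm:bipartite}, which has already been proved; Corollary~\ref{corr:bipartite} is essentially a repackaging of that lemma through the $d$-stable-family dictionary, with the folding reduction handling non-stiff graphs. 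The only thing to be careful about is the bookkeeping on indices: making sure ``$n-2$ or fewer neighbors'' translates to an $(n-2)$-stable family (padding up, not down) and hence to the bound $\zeta(G)\le n-1$, matching the claimed $a+b-1$.
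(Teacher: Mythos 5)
Your proof is correct and follows the same route as the paper: apply the contrapositive of Lemma~\ref{lm:bipartite} to a stiff $K_{a,b}$-free graph to conclude that every vertex $v$ satisfies $\bigcap_{i=1}^{n-2} N(u_i)=\{v\}$ for some (at most) $n-2$ neighbors $u_i$, so the family of singletons is $(n-2)$-stable and $\zeta(G)\le n-1=a+b-1$. You are somewhat more explicit than the paper about the reduction to the stiff case (folding preserves warmth, and a folded graph is a subgraph so remains $K_{a,b}$-free) and about padding the witnessing family up to exactly $n-2$; you also have the final inequality pointing the right way, whereas the paper's last line contains a typo reading $\zeta(G)\geq a+b-1$ where $\leq$ is intended.
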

\begin{proof}
Let $v\in V(G)$ be an arbitrary node. By assumption, there are no \[v_2,\dots ,v_a,w_1,\dots,w_b\] such that $K_{(\{v\dots v_{a}\},\{w_1,\dots,w_{b}\})}\subseteq G$, so by (the contrapositive of) Lemma~\ref{lm:bipartite}, there is $\{u_1,\dots ,u_{a+b-2}\}$ with $\cap_{i=1}^{a+b-2}N(u_i)=\{v_i\}$. This means that the singletons \[\{\{v\}:v\in V(G)\}\] is an $(a+b-2)$-stable family, so $\zeta(G)\geq a+b-1$.
\end{proof}

\section{Warmth and Connectivity of Random Graphs}\label{random}
The warmth of random graphs has been studied in~\cite{FadKah, Sha}, while neighborhood complexes of random graphs have been considered in~\cite{Kah}. In this context the objects of study are drawn from the Erd\H{o}s-Renyi random graph model $G(n,p)$, where one considers simple graphs with nodes labeled $1,\dots , n$, containing the edge $ij$ (where $i\neq j$) with probability $p$, independently for each pair $i,j\in [n]$.  We point out that Conjecture~\ref{mainconj} is consistent with the known results on random graphs from~\cite{FadKah,Kah}. 

%Indeed, the following two theorems are the main results from~\cite{FadKah} and~\cite{Kah} respectively, in a language adapted to the present paper. We will let $\alpha$ and $\delta$ be arbitrary positive constants, and say that a sequence of events $A_n$ happens asymptotically almost surely (\emph{a.a.s.}) if $\mathbb{P}[A_n]\to 1$ as $n\to \infty$. All logarithms are in base $2$.

%\begin{theorem}[Theorem 3.1--3.4 in \cite{FadKah}]\label{warmthrandom}
%If $p=\frac{1}{2}$, then \[(1-\delta)\log(n)\leq\zeta(G(n,p))\leq (1+\delta)\log(n) \hbox{ a.a.s.}.\] If $p=O(n^{-\alpha})$, then \[\frac{1}{\alpha}+1\leq\zeta(G(n,p))\leq \frac{1}{\alpha}+2 \hbox{ a.a.s.}.\]
%\end{theorem}

%\begin{theorem}[Corollary 2.4--2.5 and 2.8--2.9 in \cite{Kah}]\label{connrandom}
%If $p=\frac{1}{2}$, then \[(1-\delta)\log(n)\leq\conn(\hom(K_2,G(n,p)))\leq (\frac{4}{3}+\delta)\log(n) \hbox{ a.a.s.}.\] If $p=O(n^{-\alpha})$, then \[\frac{1}{\alpha}-2\leq\conn(\hom(K_2,G(n,p)))\leq \frac{4}{3\alpha}-1 \hbox{ a.a.s.}.\]
%\end{theorem}

In \cite{Sha}, the results on the warmth of random graphs are adapted to a somewhat more general model of random graphs. We now fix the expected degrees $w_i$ of each vertex $i$, such that $0\leq w_i\leq n-1$ and $w_i^2\leq\sum_{k=1}^n w_k$ for each $i\in[n]$. Then we include the edge~$ij$ with probability $p_{ij}=\frac{w_i w_j}{\sum_{k=1}^n w_k}$, again independently of all other edges. It is shown (Theorem 2 in \cite{Sha}) that in this setting, the bound \[(1-\delta)\log(n)\leq \zeta(G)\] remains valid in the dense regime, where $\min_i w_i^2=\Theta\left(\sum_{k=1}^n w_k\right)$. 

The lower bounds from~\cite{Kah} are obtained by bounding the probability that the neighborhood complex is $k$-neighborly, meaning that every $k$-tuple of vertices have a neighbor in common. This technique extends immediately to the non-homogeneous random graph model in \cite{Sha}, to give the same lower bound of \[(1-\delta)\log(n)\leq\conn(\hom(K_2,G(n,p)))\] as in the homogeneous case. Putting this together, we conclude that the known results for random graphs are consistent with Conjecture~\ref{mainconj}

\section{Further Questions}\label{sec:further}
Our work leaves open a number of open questions, the most obvious being the remaining cases of Conjecture \ref{mainconj}. As we have mentioned, even our proof of the case addressed in Theorem \ref{thm:fundgroup} requires a condition on the first homology group that we should be able to remove.

We also point out that even if Conjecture \ref{mainconj} turns out to be false, there could still be an interesting connection between warmth of a graph and the topology of its neighborhood complex via the various notions of ${\mathbb Z}_2$-indices of $\hom(K_2,G)$.  In the context of lower bounding $\chi(G)$ it turns out that there are two other natural topological invariants to consider, namely the \emph{index} and \emph{coindex} of the space $\hom(K_2,G)$. For a ${\mathbb Z}_2$-space $X$ the index and coindex are defined, respectively, as 
\[\ind(X) = \min\{j: \textrm{There exists a ${\mathbb Z}_2$-equivariant map $X \rightarrow {\mathbb S}^j$}\}.\]
\[\coind(X) = \max\{k: \textrm{There exists a ${\mathbb Z}_2$-equivariant map ${\mathbb S}^k \rightarrow X$}\}.\]

\noindent
Here ${\mathbb S}^j$ is considered a free ${\mathbb Z}_2$-space with the antipodal action.  The more precise version of the original result from \cite{Lov} is then that $\chi(G) \geq \ind(\hom(K_2,G))+2$, for any graph $G$.  It can be shown that for a ${\mathbb Z}_2$-space $X$ we have the inequalities
\[\conn(X) + 1 \leq \coind(X) \leq \ind(X).\]
\noindent
Hence the weakest version of Conjecture \ref{mainconj} that still captures the desired implication regarding warmth and topology of the Hom complex is given by the following.

\begin{conj}\label{mainconjweak}
For any finite graph $G$ we have $\zeta(G) \leq \ind(\hom(K_2,G)) + 2$.
\end{conj}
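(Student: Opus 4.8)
By the Brightwell--Winkler characterization recalled above, the conjecture is equivalent to the purely combinatorial statement that every finite graph $G$ admits an $\bigl(\ind(\hom(K_2,G))+1\bigr)$-stable family. This reading makes the known pieces transparent: a bipartite or disconnected $G$ has $\ind(\hom(K_2,G))=0$ and a $1$-stable family; Theorem~\ref{thm:fundgroup} is essentially the ``index $1$'' tier of this scheme (modulo the infinite-order hypothesis, which is exactly the gap flagged after its proof); and $\zeta(K_n)=n=\ind(\hom(K_2,K_n))+2$ shows the bound is attained. So the plan is to produce, for each $m$, an $(m+1)$-stable family whenever $\ind(\hom(K_2,G))\le m$; equivalently, by contraposition, to show that if $G$ has \emph{no} $(m+1)$-stable family then $\hom(K_2,G)$ admits a $\mathbb{Z}_2$-equivariant map to $\mathbb{S}^m$, where the $\mathbb{Z}_2$-action on $\hom(K_2,G)$ is the swap $\sigma\times\tau\mapsto\tau\times\sigma$ (free on loopless $G$).

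The first route I would pursue is obstruction-theoretic. Fix an equivariant CW model $X$ for $\hom(K_2,G)$ and build a $\mathbb{Z}_2$-map $X\to\mathbb{S}^m$ by extending over the equivariant skeleta one at a time; the obstruction to extending across the $(k+1)$-cells lives in $H^{k+1}_{\mathbb{Z}_2}\!\bigl(X;\pi_k(\mathbb{S}^m)\bigr)$, and the goal is to show that the absence of an $(m+1)$-stable family forces each of these classes (for $k\le m$) to vanish, so that a genuine obstruction can only first appear in dimension $m+1$, which is all that is needed. The mechanism for killing obstructions should be a higher-dimensional version of the ``forcing'' calculus in the proof of Theorem~\ref{thm:fundgroup}: there a nonzero (in fact infinite-order) class in $H_1$ was converted into a $2$-stable family $\{A_i,B_i\}$ by choosing a representing cycle minimizing $\ell/r$ and closing up ``forcing cycles'' that share a vertex with it. I would look for an analogue in which a nonvanishing $k$-dimensional obstruction is represented by an explicit equivariant $k$-cycle assembled from complete bipartite ``slabs'' of $G$, and then use a minimal such representative to manufacture the sets of a $(k+2)$-stable family.

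A complementary, more hands-on route is to work with the simplicial model of the index: $\ind(X)\le m$ iff, after iterated barycentric subdivision, $X$ admits a $\mathbb{Z}_2$-simplicial map to the boundary $\partial\lozenge^{m+1}$ of the $(m+1)$-dimensional cross-polytope, whose vertices are $\pm e_1,\dots,\pm e_{m+1}$. Such a map is a $\mathbb{Z}_2$-equivariant signed-index labelling of the vertices in which no edge joins vertices with opposite labels $\pm e_i,\mp e_i$. I would try to read the sets of an $(m+1)$-stable family off the label classes: a vertex labelled $+e_i$ or $-e_i$ would record membership in two ``dual'' sets $A_i^{+},A_i^{-}$, and the complementarity of adjacent labels should translate into the required equations $\bigcap_i N(A_i)=A$. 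The bookkeeping obstacle is that subdividing $\hom(K_2,G)$ destroys its interpretation as the directed-edge space of $G$, so one must either track the combinatorics through subdivision (as is done for $\mathbb{Z}_2$-maps out of $N(G)$ in proofs of the Lov\'asz bound) or replace $\hom(K_2,G)$ by a subdivision-stable equivariant model such as the box complex.

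The main obstacle, in either route, is the passage from dimension $1$ to higher dimensions. Every device currently available --- the minimization of $\ell/r$, the ``forcing'' of vertices into $A_i,B_i$, indeed the whole argument of Theorem~\ref{thm:fundgroup} --- is one-dimensional and exploits that $\pi_1(\mathbb{S}^1)=\mathbb{Z}$ is torsion-free; already $\pi_{m+1}(\mathbb{S}^m)$ is $2$-torsion for $m\ge3$, so the obstruction classes to be annihilated need not be detected by any ``length over multiplicity'' quantity and there is no evident higher analogue of the alternating cycle $c(\gamma)$. I would therefore regard removing the infinite-order hypothesis in Theorem~\ref{thm:fundgroup} (the case $m=1$) as the decisive test case, and I expect the full conjecture to require a new bridge tying the Brightwell--Winkler notion of long-range action more directly to equivariant maps into spheres, rather than an incremental strengthening of the homological argument.
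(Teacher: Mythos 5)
The statement you are addressing is Conjecture \ref{mainconjweak}, which the paper explicitly leaves open (it is offered in Section \ref{sec:further} as a possibly more tractable weakening of Conjecture \ref{mainconj}); there is no proof in the paper to compare against, and your proposal is not a proof either but a research program, as you yourself concede in your final paragraph. Beyond the absence of any completed argument, there is a concrete logical error in your first route: the contrapositive of ``$\ind(\hom(K_2,G))\le m$ implies the existence of an $(m+1)$-stable family'' is ``if $G$ has no $(m+1)$-stable family then $\hom(K_2,G)$ admits \emph{no} $\mathbb{Z}_2$-equivariant map to $\mathbb{S}^m$,'' not that it admits one. Your obstruction-theoretic plan therefore aims in the wrong direction: constructing a $\mathbb{Z}_2$-map to $\mathbb{S}^m$ under the hypothesis that no $(m+1)$-stable family exists would show $\ind(\hom(K_2,G))\le m$ whenever $\zeta(G)\ge m+3$, which (applied to the minimal such $m$) yields $\ind(\hom(K_2,G))+3\le\zeta(G)$ --- the reverse of the conjectured inequality. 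What is actually needed is either to \emph{obstruct} the equivariant map when warmth is large, or, as in your second route, to \emph{extract} a stable family from a given equivariant map.

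Your second route does point in the correct direction, and is close in spirit to the paper's remark that \cite{DocSch} characterizes $\ind(\hom(K_2,G))$ graph-theoretically. But the decisive step --- converting a $\mathbb{Z}_2$-simplicial labelling of (a subdivision of) $\hom(K_2,G)$ by $\pm e_1,\dots,\pm e_{m+1}$ into sets $A_1,\dots,A_{m+1}$ satisfying the exact neighborhood identities $\bigcap_i N(A_i)=A$ --- is not attempted, and that is precisely where the difficulty lives: $d$-stability requires each member of the family to be recovered \emph{exactly} as an intersection of neighborhoods of other members, a closure property that a sphere labelling does not obviously provide, and which in the one case the paper does handle (Theorem \ref{thm:fundgroup}) required the full forcing construction and the minimization of $\ell(\gamma)/r_\gamma$. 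You are right that every available technique is intrinsically one-dimensional; identifying that as the obstacle is accurate, but it leaves the conjecture exactly as open as the paper does, so the proposal cannot be accepted as a proof.
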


It may well be the case that Conjecture \ref{mainconjweak} is easier to prove, as it in particular avoids homotopy groups and speaks directly to the existence of equivariant maps into spheres.   By considering Stiefel-Whitney classes one can also get a lower bound the index in terms of vanishing of ${\mathbb Z}_2$ homology groups of $\hom(K_2, G)$ (see \cite{Koz}), a seemingly more combinatorial condition. In \cite{DocSch} the index and coindex of $\hom(K_2, G)$ is characterized graph theoretically, the latter in terms of maps from certain `spherical graphs', and the former in terms of the chromatic number of a family of graphs obtained from $G$.  As we have seen, if $G$ is a connected graph with $\zeta(G) = 3$ then $\conn(\hom(K_2,G)) = 0$ and hence the revised conjecture \ref{mainconjweak} holds.  We have established a version of the main conjecture for the case of $\zeta(G) = 4$.  It seems possible that results from the recent preprint \cite{SimTarWeh} might be employed to establish $\coind(\hom(K_2, G)) \geq 2$ directly, although we have yet to find a connection.

Also, in the spirit of Section~\ref{sec:Bipartite}, it may be interesting to study how avoidance of other subgraphs can provide upper bounds on  the connectivity and warmth. Some topological results in this direction can be found in~\cite{Kah}. Finally, following ideas from Section~\ref{random}, there are many variations of random graph models for which nothing or very little is known about both warmth and topology. A natural model to consider in this regard is that of preferential attachment.

\end{document}